\newtheorem{theorem}{Theorem}[section]
\newtheorem{lemma}[theorem]{Lemma}
\newtheorem{corollary}[theorem]{Corollary}
\newtheorem{proposition}[theorem]{Proposition}
\newtheorem{remark}[theorem]{Remark}
\newtheorem{definition}[theorem]{Definition}
\newtheorem{example}[theorem]{Example}
\numberwithin{equation}{section}
\newcommand{\cz}{{\mathbb C}}
\newcommand{\nz}{{\mathbb N}}
\newcommand{\rz}{{\mathbb R}}
\newcommand{\sz}{{\mathbb S}}
\newcommand{\bfn}{\mathbf{n}}
\newcommand{\bfL}{\mathbf{L}}
\newcommand{\bfS}{\mathbf{S}}
\newcommand{\bfT}{\mathbf{T}}
\newcommand{\frakg}{\mathfrak{g}}
\newcommand{\frakG}{\mathfrak{G}}
\newcommand{\scrC}{\mathscr{C}}
\newcommand{\scrL}{\mathscr{L}}
\newcommand{\scrS}{\mathscr{S}}
\newcommand{\cl}{\mathrm{cl}}
\newcommand{\dbar}{d\hspace*{-0.08em}\bar{}\hspace*{0.1em}}
\newcommand{\eps}{\varepsilon}
\newcommand{\forget}[1]{}
\newcommand{\lra}{\longrightarrow}
\newcommand{\op}{\mathrm{op}}
\newcommand{\rpbar}{\overline{\rz}_+}
\newcommand{\smsum}{\mathop{\mbox{\large$\sum$}}}
\newcommand{\spk}[1]{\langle#1\rangle}
\newcommand{\wcl}{\text{$\mathrm{w}$-$\mathrm{cl}$}}
\newcommand{\wh}{\widehat}
\newcommand{\whsz}{\widehat{\mathbb S}}
\newcommand{\wt}{\widetilde}
\begin{document}
\title[Parameter-dependent Operators of Toeplitz Type]{Parameter-dependent Pseudodifferential\\ 
Operators of Toeplitz Type}

\author{J\"org Seiler}
\address{Universit\`{a} di Torino, Dipartimento di Matematica, 10123 Torino $($Italy$)$}
\email{joerg.seiler@unito.it}

\begin{abstract}
We present a calculus of pseudodifferential operators that contains both usual parameter-dependent operators -- 
where a real parameter $\tau$ enters as an additional covariable -- as well as operators not depending on $\tau$. 
Parameter-ellipticity is characterized by the invertibility of three associated principal symbols. 
The homogeneous principal symbol is not smooth on the whole co-sphere bundle but only admits directional limits 
at the north-poles, encoded by a principal angular symbol. Furthermore there is a limit-family for $\tau\to+\infty$. 
Ellipticity permits to construct parametrices that are inverses for large values of the parameter. 
We then obtain sub-calculi of Toeplitz type with a corresponding symbol structure. In particular, we discuss  
invertibility of operators of the form $P_1A(\tau)P_0$ where both $P_0$ and $P_1$ are zero-order projections 
and $A(\tau)$ is a usual parameter-dependent operator of arbitrary order or $A(\tau)=\tau^{\mu}-A$ with a 
pseudodifferential operator $A$ of positive integer order $\mu$. 
\end{abstract}

\maketitle

\section{Introduction}\label{sec:intro}

The theory of parameter-dependent pseudodifferential operators provides a systematic operator-algebraic approach 
to analyzing spectral properties of differential and pseudodifferential operators, in particular, the existence and the structure 
of their resolvents. After the classical works of Agmon \cite{Agmo} and Agranovich, Vishik \cite{AgVi} dealing with 
parameter-elliptic and parabolic boundary value problems, the foundations of this concept 
were layed in the classical works of Seeley \cite{Seel0}, \cite{Seel1}, \cite{Seel2} on complex powers of operators on smooth 
manifolds, also with boundary. From there on parameter-dependent pseudodifferential operators became a standard tool  
in partial differential equations as well as in geometric and global analysis. 

In \cite{SSS98} Schulze, Shatalov and Sternin developed a systematic theory of elliptic boundary value problems with 
boundary conditions of generalized Atiyah-Patodi-Singer type (with ellipticity being equivalent to the Fredholm 
property in associated Sobolev spaces). In difference to the usual theory of boundary value problems with 
Lopatinskii-Shapiro ellipticity, roughly speaking, the range spaces of the boundary conditions in this generalized set-up are 
not the full Sobolev spaces, but closed subspaces determined by pseudodifferential projections. In \cite{Schu37} Schulze 
embedded their results in the framework of a pseudodifferential calculus, an algebra of Boutet de Monvel type. Due to 
the presence of projections he introduced the notion of pseudodifferential operators of \emph{Toeplitz type}. 
In \cite{Seil} the author showed that elliptic theory for operators of Toeplitz type can be formulated on a quite general level, 
allowing to realize this concept for any ``reasonably nice" calculus of pseudodifferential operators (including, for example, 
operators on manifolds with conical singularities, and Boutet de Monvel's algebra for boundary value problems). 
It is also indicated how parameter-dependent operators can be treated on a general level. 
It is the aim of the present paper to realize the abstract approach of parameter-ellipticity for pseudodifferential operators 
of Toeplitz type concretely in the case of operators on Euklidean space and closed smooth manifolds. 

To illustrate in more detail the contents of this paper let us first recall the probably most elementary calculus of 
parameter-dependent operators, designed for describing resolvents of differential operators on closed manifolds: 
If $M$ is a closed manifold and $F_0,F_1$ are two vector bundles over $M$ let us denote by 
$L^\mu_\cl(\rpbar;M,F_0,F_1)$ the space of classical pseudodifferential operators of order $\mu\in\rz$ acting from 
sections of $F_0$ to sections of $F_1$ and that depend on a parameter $\tau\ge0$ that enters as an additional 
co-variable $($for simplicity of presentation we focus in this introduction on the case of a real parameter $\tau$, while 
later we shall admit a more general parameter space$)$. This means that in local coordinates $x$, with 
corresponding co-variable $\xi$, the local pseudodifferential symbols satisfy estimates of the form 
\begin{equation*}
 |D^\alpha_\xi D^\beta_x D^k_\tau a(x,\xi,\tau)|\le 
  C_{\alpha\beta\gamma}(1+|\xi|+|\tau|)^{\mu-|\alpha|-k}, 
\end{equation*}
Thus differentiation with respect to $\xi$ or $\tau$ improves the decay in $\xi$ and 
$\tau$ simultaneously. The phrase \emph{classical}, indicated by the subscript $\cl$, for us means that, additionally, 
$a$ has a complete asymptotic expansion into components that are positively homogeneous with respect to 
$(\xi,\tau)$; see Section \ref{sec:1} for more details. With any such operator one can associate a homogeneous 
principal symbol which is a bundle homomorphism 
 $$\pi^*F_0\lra\pi^*F_1,\qquad \pi:(T^*M\times\rpbar)\setminus\{0\}\lra M,$$
where $\pi$ is the canonical projection onto $M$. Due to the homogeneity this homomorphism is uniquely determined 
by its restriction to the unit-cosphere bundle. 
An operator $A(\tau)$ is called \emph{parameter-elliptic} if its 
homogeneous principal symbol is an isomorphism. In this case one can find a parametrix $B(\tau)$ of corresponding  
negativ order that coincides with the inverse $A(\tau)^{-1}$ for sufficiently large values of $\tau$. 
For example, $\tau^2-\Delta$ with the Laplacian on $M$ is a parameter-elliptic operator of order $2$, and its 
parametrix/inverse is of order $-2$. 

Note that one cannot proceed in this way when the Laplacian is replaced by a 
pseudodifferential $($and non-differential$)$ operator $A$. This is due to the fact that 
$\tau^{\mu}-A$, $\mu=\mathrm{ord}\,A$, is not a parameter-dependent operator in the above described sense. 
To cover this case Grubb in \cite{Grub} introduced a more general calculus of parameter-dependent operators 
$($actually in her book a parameter-dependent version of Boutet de Monvel's algebra for boundary value problems is 
developed, containing operators on a closed manifold as a simple special case$)$ that she later in \cite{GrSe} together 
with Seeley further refined for studying resolvent trace asymptotics for pseudodifferential operators and certain 
non-local boundary value problems.

The main motivation for the present paper was to study the invertibility of operators of the form  
$S(\tau)=P_1A(\tau)P_0$ for large values of $\tau$ and the structure of the inverse, where 
$A(\tau)\in L^\mu_\cl(\rpbar;M,F_0,F_1)$ and $P_j\in L^0_\cl(M,F_j,F_j)$ are two zero-order pseudodifferential 
projections not depending on the parameter $($invertibility now refers to invertibility as a map between 
the closed subspaces determined by the projections, say the subspaces of the smooth sections of $F_0$ and $F_1$, 
respectively$)$. The difficulty lies in the fact that $L^0_\cl(M,F_j,F_j)$ is not a subset of $L^0_\cl(\rpbar;M,F_j,F_j)$;  
this prevents us from a direct application of the general sceme from \cite{Seil}. Also  the approach of both \cite{Grub} and 
\cite{GrSe} does not seem to apply to this situation. To cope with operators of the form $S(\tau)$ we shall construct, 
in Section \ref{sec:3}, a calculus of pseudodifferential operators containing both parameter-dependent operators in 
the above described sense of non-positive integer order as well as operators independent of the parameter of arbitrary 
order. In local coordinates, the resulting symbols have a complete asymptotic expansion into homogeneous components, 
but in contrast to the previously described standard set-up the components do not necessarily restrict to smooth 
functions on the whole $(\xi,\tau)$-unit sphere but have a specific singular structure at the ``north-pole" $(\xi,\tau)=(0,1)$: 
introducing polar-coordinates they admit Taylor-asymptotics; see Definition \ref{def:taylor1} for details. Globally, the 
homogeneous principal symbol is defined on the unit-cosphere bundle in $T^*M\times\rpbar$ with the section of 
north-poles removed. The parameter-ellipticity in this calculus is characterized by the invertibility of the homogeneous 
principal symbol outside the north-poles, the invertibility of the first Taylor term (globally a symbol on the cosphere 
bundle of $M$, which we call the principal \emph{angular symbol}$)$, and the invertibility of a certain \emph{limit-family} 
obtained as $\tau\to\infty$. 
Parameter-elliptic operators possess a parametrix within the calculus that coincides with the inverse for large values 
of the parameter. Once established this calculus we can employ the sceme of \cite{Seil} and derive the corresponding 
notion of parameter-ellipticity for parameter-dependent Toeplitz operators, in particular, for operators of the form 
$S(\tau)$. As it turns out also operators of the form $P_1(\tau^{\mu}-A)P_0$ with $A\in L^\mu_\cl(M,F,F)$ of positive 
integer order and a projections $P_j\in L^0_\cl(M,F,F)$ are covered. 

The calculus constructed in Section \ref{sec:3} is closely related to one earlier developed by 
Savin and Sternin in \cite{SaSt}. In this paper the 
authors introduce a calculus of zero-order operators on a fibration with both base and fibre being closed 
manifolds $($which is needed to describe a class of boundary value problems on manifolds with fibred boundary they are 
ultimately interested in$)$. This calculus comes along with a principal symbolic structure that corresponds to ours, 
i.e., homogeneous principal symbol, principal angular symbol, and limit-family. While we are interested in invertibility 
of operator-families for large values of the parameter, they also quantize the parameter (i.e., treat it as a covariable 
of the base manifold) and analyze the Fredholm property of the resulting operators. The residual class of their calculus 
consists of the compact operators, while in our calculus we have operators of any order and a complete 
asymptotic expansion into homogeneous components. 

\section{Important notation}\label{sec:1}

In this section we shall introduce some notation that will be used throughout the paper. 
With some fixed choice of $0\le a\le b<2\pi$ we set 
\begin{equation*}
 \Lambda=\Lambda(a,b):=\Big\{\lambda=(\tau,\theta) \;\big\vert\; \tau\ge 0,\; a\le\theta\le b\Big\}\subset\rz^2
\end{equation*}
$($in applications $\Lambda$ may also be identified, via polar-coordinates, with a sector in the complex plane$)$.  

\subsection{Pseudodifferential symbols}\label{sec:1.1}

With $\mu\in\rz$ and $N_0,N_1\in\nz$ we let $S^\mu(\rz^n,\rz^n\times\Lambda;N_0,N_1)$ denote the space of all 
functions $a:\rz^n_x\times\rz^n_\xi\times\Lambda\to\cz^{N_1\times N_0}$ taking values in the complex 
$(N_1\times N_0)$-matrices that are infinitely many times continuously differentiable with 
respect to $(x,\xi,\tau)$ and satisfy uniform estimates 
\begin{equation*}
 |D^\alpha_\xi D^\beta_x D^k_\tau a(x,\xi,\lambda)|\le 
  C_{\alpha\beta\gamma}\spk{\xi,\tau}^{\mu-|\alpha|-|\gamma|}
\end{equation*}
for any order of derivatives; here $\spk{\xi,\tau}:=(1+|\xi|^2+|\tau|^2)^{1/2}$. 
The symbol $a$ is called 
\emph{classical} if there exists a sequence of symbols $a_{\mu-j}\in  S^{\mu-j}(\rz^n,\rz^n\times\Lambda;N_0,N_1)$ 
which are homogeneous in the large of degree $\mu-j$, i.e., 
 $$a_{\mu-j}(x,t\xi,t\tau,\theta)=t^{\mu-j}a(x,\xi,\tau,\theta)\qquad \forall\;t\ge1\quad\forall\;|(\xi,\tau)|\ge1,$$
such that 
 $$ a-\smsum\limits_{j=0}^{\ell-1} a_{\mu-j}\;\in\; S^{\mu-j-\ell}(\rz^n,\rz^n\times\Lambda;N_0,N_1).$$
The functions 
\begin{equation*}
 a^{(\mu-j)}(x,\xi,\tau,\theta)=a_{\mu-j}\Big(x,\frac{(\xi,\tau)}{|(\xi,\tau)|},\theta\Big),
 \qquad (\xi,\tau)\not=0,
\end{equation*}
 are called the homogeneous components of $a$ and $a^{(\mu)}$ the \emph{homogeneous principal symbol} of $a$. 
The space of classical symbols of order $\mu$ shall be denoted by 
$S^{\mu}_\cl(\rz^n,\rz^n\times\Lambda;N_0,N_1)$.

We shall also use symbols without parameter. The classes $S^{\mu}(\rz^n,\rz^n;N_0,N_1)$ and 
$S^{\mu}_\cl(\rz^n,\rz^n;N_0,N_1)$ are defined as above by eliminating everywhere the parameter $\lambda$.  

With $a\in S^{\mu}(\rz^n,\rz^n;N_0,N_1)$ associate the operator $\op(a)=a(x,D)$ defined by 
 $$[\op(a)u](x)=\int e^{ix\xi}a(x,\xi)\wh{u}(\xi)\,\dbar\xi;$$
then $\op(a)$ defines a map $\scrS(\rz^n,\cz^{N_0})\to \scrS(\rz^n,\cz^{N_1})$ between the spaces of 
rapidly decreasing functions that extends by continuity to a map between the standard Sobolev (Bessel potential) 
spaces, $\op(a):H^s(\rz^n,\cz^{N_0})\to H^{s-\mu}(\rz^n,\cz^{N_1})$ for arbitrary $s\in\rz$.  

For convenience we shall frequently use the short-hand notations 
 $$S^\mu,\quad S^\mu_\cl,\quad S^\mu(\Lambda),\quad S^\mu_\cl(\Lambda);$$
in particular, the numbers $N_0$ and $N_1$ will be indicated only if necessary. 

Occasionally we shall also make use of a version of the symbol class $S^\mu$ where the symbols do not take values 
in a space of matrices but in a \emph{Fr\'{e}chet space} $E$; we shall denote this space by $S^\mu(E)$. A function 
$a:\rz^n\times\rz^n\to E$ belongs to $S^\mu(E)$ if it is smooth and satisfies uniform estimates 
\begin{equation*}\label{eq:frechet}
 p\big(D^\alpha_\xi D^\beta_x a(x,\xi)\big)\le C_{\alpha\beta p}(a)\spk{\xi}^{\mu-|\alpha|}
\end{equation*}
for any continuous semi-norm $p$ on $E$ and any order of derivatives. For example, we can consider 
$a(x,\xi)\in S^\mu(\rz^n,\rz^n;N_0,N_1)$ as a symbol $a(\xi)\in S^\mu(E)$ with 
$E=\scrC^\infty_b(\rz^n_x,\cz^{N_1\times N_0})$, the space of all smooth $\cz^{N_1\times N_0}$-valued 
functions having bounded derivatives of any order. 

\subsection{Pseudodifferential operators on closed manifolds}\label{sec:1.2}

We let $M$ denote a smooth closed Riemannian manifold of dimension $\mathrm{dim}\,M=n$. Using a partition of 
unity and local coordinates and local bundle trivialisations one can define the spaces  
 $$L^\mu_{(\cl)}(M,F_0,F_1), \qquad L^\mu_{(\cl)}(\Lambda;M,F_0,F_1)$$
of $($parameter-dependent$)$ pseudodifferential operators acting between sections of the hermitian 
vector-bundles $F_0$ and $F_1$. The local operators have symbols as described in the previous subsection with 
$N_j=\mathrm{dim}\,F_j$ for $j=0,1$, while the spaces of global smoothing operators 
 $$L^{-\infty}(M,F_0,F_1),\qquad L^{-\infty}(\Lambda;M,F_0,F_1)$$
consist of those integral operators on $M$ having an integral kernel belonging to $\scrC^\infty({F_1\boxtimes F_0})$,  
depending continuously on $\theta$ and rapidly decreasing on $\tau$ in the case of parameter-dependence. 

The locally defined homogeneous principal symbols induce globally bundle homomorphisms 
$\pi^*F_0\to\pi^*F_1$, where $\pi$ is the natural projection $T^*M\setminus\{0\}\to M$ in case of 
operators without parameter, otherwise the projection $(T^*M\times\Lambda)\setminus\{0\}\to M$. 

\section{Abstract parameter-dependent pseudodifferential operators}\label{sec:2}

In this section we recall and summarize the concept of abstract pseudodifferential operators of Toeplitz type 
from \cite{Seil}. We shall use a slightly modified and simplified notation. 

\subsection{Abstract pseudodifferential calculi}

Let $G$ denote a ``set of admissable weights $g$" and set $\frakG=G\times G$. 
With any $g\in G$ there is associated a Hilbert space $H(g)$. With any pair $\frakg=(g_0,g_1)\in \frakG$ we associate 
a vector space $L^0(\Lambda;\frakg)$ of ``zero-order parameter-dependent operators" and a subspace 
$L^{-\infty}(\Lambda;\frakg)$ of ``smoothing parameter-dependent operators", where 
 $$L^0(\Lambda;\frakg)\subset\scrC\big(\Lambda,\scrL(H(g_0),H(g_1))\big),$$
and
the smoothing operators, additionally, are assumed to vanish as $|\lambda|\to+\infty$. To emphasize the presence
of the parameter we shall use notations $A(\lambda)$, $B(\lambda)$, etc. for the elements of $L^0(\Lambda;\frakg)$. 

If $\frakg_0=(g_0,g_1)$ and $\frakg_1=(g_1,g_2)$ are two pairs of admissible weights and 
$\frakg_1\circ\frakg_0:=(g_0,g_2)$, then the $(\lambda-$wise$)$ composition of operators is assumed to induce maps 
    $$L^{\mu_1}(\Lambda;\frakg_1)\times L^{\mu_0}(\Lambda;\frakg_0)\lra 
        L^{\mu_1+\mu_0}(\Lambda;\frakg_1\circ\frakg_0),$$
for any choice of $\mu_0,\mu_1\in\{0,-\infty\}$. If $\frakg=(g_0,g_1)$ and $\frakg^{(-1)}:=(g_0,g_1)$ then taking 
$(\lambda-$wise$)$ the adjoint of operators is supposed to yield mapppings 
 $$L^\mu(\Lambda;\frakg)\lra L^\mu(\Lambda;\frakg^{(-1)}).$$
Due to the vanishing at infinity of the smoothing operators, $1-R(\lambda)$ is invertible for sufficiently large 
$|\lambda|$ whenever $R(\lambda)\in L^{-\infty}(\Lambda;\frakg)$ with $\frakg=(g,g)$. We shall assume that the 
inverse again has the same structure, i.e., there exists an $S(\lambda)\in L^{-\infty}(\Lambda;\frakg)$ such that 
 $$(1-R(\lambda))(1-S(\lambda))=(1-S(\lambda))(1-R(\lambda))=1$$
for $|\lambda|$ sufficiently large. This is equivalent to asking that there exists an 
$R^\prime(\lambda)\in L^{-\infty}(\Lambda;\frakg)$ such that, for large $|\lambda|$,  
 $$R^\prime(\lambda)=R(\lambda)(1-R(\lambda))^{-1}R(\lambda).$$

We shall assume that there exists a ``principal symbol", i.e., a map 
 $$A(\lambda)\mapsto \sigma(A)=\big(\sigma_1(A),\ldots,\sigma_n(A)\big)$$
assigning to each $A(\lambda)\in L^{0}(\Lambda;\frakg)$, $\frakg=(g_0,g_1)\in\frakG$, an $n$-tuple of 
$($continuous$)$ bundle homomorphisms 
\begin{equation}\label{eq:bundle}
 \sigma_k(A):E_k(g_0)\lra E_k(g_1),\qquad k=1,\ldots,n,
\end{equation}
where $E_k(g)$ denotes a Hilbert space bundle\footnote{over some base manifold $B_k(g)$, locally modelled with a 
finite or infinite dimensional separable Hilbert space} associated with the weight $g\in G$. The principal symbol is 
supposed to vanish on smoothing operators and to be compatible with addition, composition, and taking the adjoint. 
Moreover, the following are supposed to be equivalent: 
\begin{itemize}
 \item[(L1)] $A(\lambda)\in  L^{0}(\Lambda;\frakg)$ is \emph{parameter-elliptic}, i.e., all maps \eqref{eq:bundle} 
   are isomorphisms. 
 \item[(L2)] $A(\lambda)\in  L^{0}(\Lambda;\frakg)$ has a parametrix 
  $B(\lambda)\in  L^{0}(\Lambda;\frakg^{(-1)})$, i.e., 
  both $1-A(\lambda)B(\lambda)$ and $1-B(\lambda)A(\lambda)$ are smoothing. 
\end{itemize}
Due to the above described assumption on smoothing operators, for a parameter-elliptic $A(\lambda)$ we can find 
a parametrix which coincides with $A(\lambda)^{-1}$ for sufficiently large $|\lambda|$. 

\begin{example}\label{ex:0}
Let $M$ be a smooth closed Riemannian manifold. Let the set $G$ of admissable weights consist of all pairs 
$g=(M,F)$, where $F$ is a smooth hermitian vector bundle over $M$. For $g=(M,F)$ we define  
 $$H(g):= L^2(M,F), \qquad E(g)=E_1(g):=\pi^*F,$$
where $\pi:(T^*M\times\Lambda)\setminus\{0\}\to M$ is the canonical projection. 
If $\frakg=(g_0,g_1)$ with $g_j=(M,F_j)$ we let $L^0(\Lambda;\frakg)$ denote the space of zero order 
parameter-dependent classical pseudodifferential operators acting from sections of $F_0$ to sections of 
$F_1$ as described in Section $\mathrm{\ref{sec:1.2}}$. The principal symbol $\sigma(A)=\sigma_1(A)$ of 
$A(\lambda)\in L^0(\Lambda;\frakg)$ is the homogeneous principal symbol. 
\end{example}

\subsection{Parameter-dependent operators of Toeplitz type}

Let $\frakg=(g_0,g_1)$ be a pair of admissable weights and $P_j(\lambda)\in L^0(\Lambda;\frakg_j)$, 
$\frakg_j=(g_j,g_j)$, with $j=0,1$ be two projections, i.e., $P_j(\lambda)^2=P_j(\lambda)$. We then set 
 $$T^\mu(\Lambda;\frakg,P_0,P_1)=
     \Big\{A(\lambda)\in L^\mu(\Lambda;\frakg) \;\big\vert\; A(\lambda)(1-P_0(\lambda))=0,
     \;(1-P_1(\lambda))A(\lambda)=0\Big\}$$
with $\mu=0$ or $\mu=-\infty$. Note that this implies $A(\lambda)=P_1(\lambda)A(\lambda)P_0(\lambda)$ whenever 
$A(\lambda)\in T^\mu(\Lambda;\frakg,P_0,P_1)$. A parametrix of such an $A(\lambda)$ is any parameter-dependent 
operator   $B(\lambda)\in L^{0}(\Lambda;\frakg^{(-1)},P_1,P_0)$ such that 
\begin{align*}
 P_0(\lambda)-B(\lambda)A(\lambda)&\in T^{-\infty}(\Lambda;\frakg_0,P_0,P_0)\\  
 P_1(\lambda)-A(\lambda)B(\lambda)&\in T^{-\infty}(\Lambda;\frakg_1,P_1,P_1).
\end{align*}

If $P(\lambda)\in L^0(\Lambda;\frakg)$, $\frakg=(g,g)$, is a projection then so is any associated principal symbol 
$\sigma_k(P)$. Hence 
 $$E_k(g,P):=\mathrm{range}\Big(\sigma_k(P):E_k(g)\to E_k(g)\Big)$$
is a subbundle of $E_k(g)$. For $A(\lambda)\in T^0(\Lambda;\frakg,P_0,P_1)$ we then define the principal symbol 
 $$\sigma(A;P_0,P_1)=\big(\sigma_1(A;P_0,P_1),\ldots,\sigma_n(A;P_0,P_1)\big)$$
with 
\begin{equation}\label{eq:bundle2}
 \sigma_k(A;P_0,P_1)=\sigma_k(A):E_k(g_0,P_0)\lra E_k(g_1,P_1),\qquad k=1,\ldots,n. 
\end{equation}
The following theorem now holds true: 

\begin{theorem}[Theorem 3.18 of \cite{Seil}]\label{thm:main1}
Under the above assumptions the following two properties are equivalent: 
\begin{itemize}
 \item[$\mathrm{(T1)}$] 
  $A(\lambda)\in T^{0}(\Lambda;\frakg,P_0,P_1)$ is parameter-elliptic, i.e., all maps \eqref{eq:bundle2} are 
   isomorphisms. 
 \item[$\mathrm{(T2)}$] 
  $A(\lambda)\in  T^{0}(\Lambda;\frakg,P_0,P_1)$ has a parametrix 
  $B(\lambda)\in T^{0}(\Lambda;\frakg^{(-1)},P_1,P_0)$.
\end{itemize}
In this case, one can choose a parametrix $B(\lambda)$ in such a way that 
$B(\lambda)A(\lambda)=P_0(\lambda)$ and $A(\lambda)B(\lambda)=P_1(\lambda)$ for large enough $|\lambda|$.
\end{theorem}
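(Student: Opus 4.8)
The plan is to prove the equivalence in two directions, the implication (T2)$\Rightarrow$(T1) being immediate and (T1)$\Rightarrow$(T2) carrying all the content. Besides the properties recorded in the excerpt (multiplicativity of $\sigma$, its vanishing on smoothing operators, and the invertibility of $1-R(\lambda)$ for large $|\lambda|$ with smoothing correction), I use the two standard structural features of such a calculus that belong to the framework of \cite{Seil}: the symbol sequence is exact, so that a zero-order operator with vanishing principal symbol is smoothing, and the symbol map is surjective onto admissible bundle homomorphisms. For (T2)$\Rightarrow$(T1): given a Toeplitz parametrix $B(\lambda)\in T^0(\Lambda;\frakg^{(-1)},P_1,P_0)$, one has $P_0-B(\lambda)A(\lambda)\in T^{-\infty}$ and $P_1-A(\lambda)B(\lambda)\in T^{-\infty}$; applying $\sigma_k$ gives $\sigma_k(B)\sigma_k(A)=\sigma_k(P_0)$ and $\sigma_k(A)\sigma_k(B)=\sigma_k(P_1)$, and restricting to the subbundles $E_k(g_j,P_j)$ shows $\sigma_k(B;P_1,P_0)$ is a two-sided inverse of $\sigma_k(A;P_0,P_1)$, which is (T1).

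The substance is (T1)$\Rightarrow$(T2), and the first step is to invert the principal symbol on the subbundles. Since $\sigma_k(A;P_0,P_1)\colon E_k(g_0,P_0)\to E_k(g_1,P_1)$ is an isomorphism, I define a bundle homomorphism $b_k\colon E_k(g_1)\to E_k(g_0)$ to be $\sigma_k(A;P_0,P_1)^{-1}$ on $E_k(g_1,P_1)$ and zero on a complement of $\ker\sigma_k(P_1)$, so that $b_k=\sigma_k(P_0)\,b_k\,\sigma_k(P_1)$. Using $\sigma_k(A)=\sigma_k(P_1)\sigma_k(A)\sigma_k(P_0)$ one checks fibrewise that $b_k\sigma_k(A)=\sigma_k(P_0)$ and $\sigma_k(A)b_k=\sigma_k(P_1)$. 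I then realize $(b_k)_k$ by some $C(\lambda)\in L^0(\Lambda;\frakg^{(-1)})$ and set $B_0(\lambda):=P_0(\lambda)C(\lambda)P_1(\lambda)$, which lies in $T^0(\Lambda;\frakg^{(-1)},P_1,P_0)$ and still has symbol $(b_k)_k$ since $b_k=\sigma_k(P_0)b_k\sigma_k(P_1)$. By construction $\sigma(P_0-B_0A)=0$ and $\sigma(P_1-AB_0)=0$, so by exactness both remainders are smoothing; moreover $R_0:=P_0-B_0A\in T^{-\infty}(\Lambda;\frakg_0,P_0,P_0)$ and $R_1:=P_1-AB_0\in T^{-\infty}(\Lambda;\frakg_1,P_1,P_1)$.

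To upgrade $B_0$ to an exact inverse for large $|\lambda|$, I correct on each side by the ambient smoothing-invertibility. For large $|\lambda|$ write $(1-R_0)^{-1}=1-S_0$ with $S_0$ smoothing; since $R_0=P_0R_0P_0$ one gets $S_0=P_0S_0P_0$, and a short computation gives $(1-R_0)^{-1}(P_0-R_0)=P_0$. Hence $B_\ell:=(1-S_0)B_0$ lies again in $T^0(\Lambda;\frakg^{(-1)},P_1,P_0)$ and satisfies $B_\ell A=P_0$ for large $|\lambda|$; symmetrically $B_r:=B_0(1-S_1)$ satisfies $AB_r=P_1$. The two then coincide there, because $B_\ell=B_\ell P_1=B_\ell(AB_r)=(B_\ell A)B_r=P_0B_r=B_r$, so $B:=B_\ell=B_r$ is a two-sided operator with $BA=P_0$ and $AB=P_1$ for all sufficiently large $|\lambda|$; defining $B(\lambda):=B_0(\lambda)$ on the remaining compact parameter range yields a global element of $T^0(\Lambda;\frakg^{(-1)},P_1,P_0)$ that is a parametrix and an exact inverse for large $|\lambda|$, which is precisely (T2) together with the final assertion.

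The main obstacle is conceptual rather than computational. The tempting shortcut is to complete $A$ to a fully parameter-elliptic operator $A(\lambda)+(1-P_1(\lambda))V(\lambda)(1-P_0(\lambda))$ and apply the non-Toeplitz equivalence (L1)$\Leftrightarrow$(L2) from the ambient calculus; but this requires a symbol isomorphism $\ker\sigma_k(P_0)\cong\ker\sigma_k(P_1)$ of the complementary bundles, which need not exist since $g_0$ and $g_1$, hence $E_k(g_0)$ and $E_k(g_1)$, are unrelated in general. The whole point of the argument above is to bypass this $K$-theoretic obstruction by inverting the symbol only on the subbundles and extending by zero, at the price of running the parametrix machinery—symbol realization, exactness, and the smoothing-invertibility axiom—intrinsically inside $T^0(\Lambda;\frakg,P_0,P_1)$; verifying that each of these ingredients survives the passage to the Toeplitz class (in particular that $S_0,S_1$ remain Toeplitz) is the part that requires genuine care.
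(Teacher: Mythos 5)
Your direction (T2)$\Rightarrow$(T1) is correct, and your final step (using the smoothing-inverse axiom to turn a Toeplitz parametrix into an exact inverse for large $|\lambda|$, including the observation $S_0=P_0S_0P_0$) is sound and close to what the paper needs. But the core of your (T1)$\Rightarrow$(T2) argument rests on two axioms that are simply not part of the abstract framework of Section \ref{sec:2}: exactness of the symbol sequence ($\sigma(A)=0$ forces $A$ smoothing) and surjectivity of the symbol map onto ``admissible'' homomorphisms. The framework only posits composition, adjoints, the smoothing-inverse property, compatibility and vanishing of $\sigma$ on smoothing operators, and the equivalence (L1)$\Leftrightarrow$(L2). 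Worse, exactness is \emph{false} in every concrete calculus the theorem is meant to serve: in Example \ref{ex:0}, or in the weakly classical calculus of Section \ref{sec:3}, an operator of order $0$ whose principal symbol(s) vanish is merely of order $-1$, not smoothing; the Neumann-series/asymptotic-summation work needed to pass from an inverted symbol to a parametrix modulo $L^{-\infty}$ is precisely what the axiom (L1)$\Rightarrow$(L2) encapsulates, and it is carried out inside each concrete calculus (Theorem \ref{thm:parametrix1}), not available abstractly. Likewise, realizing your tuple $(b_k)$ by an operator $C(\lambda)$ is not an axiom, and in the refined calculus the components $\sigma_1,\sigma_2,\sigma_3$ satisfy nontrivial compatibility relations (cf.\ \eqref{eq:radial} and Lemma \ref{lem:A}b)), so a tuple built independently for each $k$ by ``invert on the subbundle, extend by zero'' would first have to be shown admissible --- a point you do not address. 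So as written the construction of $B_0$ does not go through.

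The telltale sign is that your proof never invokes (L1)$\Rightarrow$(L2), which the paper flags as the engine: the remark immediately after the theorem states that (L1)$\Rightarrow$(L2) alone already yields (T1)$\Rightarrow$(T2). The proof in \cite{Seil} reduces to the full calculus as follows: from (T1) one shows that the auxiliary operators $\Lambda_0(\lambda):=A(\lambda)^*A(\lambda)+(1-P_0(\lambda))^*(1-P_0(\lambda))\in L^0(\Lambda;(g_0,g_0))$ and $\Lambda_1(\lambda):=A(\lambda)A(\lambda)^*+(1-P_1(\lambda))(1-P_1(\lambda))^*\in L^0(\Lambda;(g_1,g_1))$ are parameter-elliptic in the sense of (L1) (a fibrewise positivity/direct-sum argument), hence by (L1)$\Rightarrow$(L2) have parametrices $\Lambda_j^{(-1)}$. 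Then $B_\ell:=P_0\Lambda_0^{(-1)}A^*P_1$ is a left Toeplitz parametrix, because $A^*A=\Lambda_0-(1-P_0)^*(1-P_0)$ and the cross term dies against $P_0$ on the right (since $(1-P_0)P_0=0$); symmetrically $B_r:=P_0A^*\Lambda_1^{(-1)}P_1$ is a right one, the two agree modulo Toeplitz smoothing operators, and the smoothing-inverse axiom then gives the exact inverse for large $|\lambda|$. Note that this construction also sidesteps the $K$-theoretic obstruction you correctly identify, but differently than you propose: not by inverting the symbol on subbundles and extending by zero, but by forming self-adjoint auxiliary operators on $g_0$ and on $g_1$ separately, so that no relation between $\ker\sigma_k(P_0)$ and $\ker\sigma_k(P_1)$ is ever needed.
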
 

Let us note that if we do not have equivalence of (L1) and (L2) but only that (L1) implies (L2) then (T1) implies (T2). 
Clearly Theorem \ref{thm:main1} implies that 
 $$A(\lambda):H_\lambda(g_0,P_0)\lra H_\lambda(g_1,P_1)$$
is an isomorphism for large $|\lambda|$, where we have used the notation 
 $$H_\lambda(g,P)=P(\lambda)\big(H(g)\big),\qquad P(\lambda)\in L^0(\Lambda;(g,g)).$$
Note that $H_\lambda(g,P)$ is a closed subspace of $H(g)$ for any $\lambda$.  

\begin{example}
We can apply the above construction to the parameter-dependent classical pseudodifferential operators on $M$ 
as described in Example \ref{ex:0}. However, in this way we cannot deal in a satisfactory manner with Toeplitz 
operators of the form $P_1A(\lambda)P_0$ where at least one of the projections does not depend on the 
parameter.  This is due to the fact that $L^0_\cl(M,F,F)\not\subset L^0_\cl(\Lambda;M,F,F)$; in fact 
 $$L^0_\cl(M,F_0,F_1)\cap L^0_\cl(\Lambda;M,F_0,F_1)=\mathrm{Hom}(F_0,F_1),$$
the space of bundle homomorphisms $F_0\to F_1$. In the next section we develop a calculus avoiding this problem. 
\end{example}

Let us remark that Grubb in \cite{Grub} introduced a pseudodifferential calculus that allows to consider 
fixed operators 
$($i.e., operators not depending on the parameter$)$ as parameter-dependent ones. In this calculus to each 
element is asigned an order as well as a regularity (we do not go into details here, but only mention that the 
regularity somehow measures the deviation of the parameter-dependence from the one described in 
Section \ref{sec:1}). There is a concept of parameter-ellipticity in this calculus requiring, in particular, positive 
regularity. Since fixed operators in general only have regularity 0 this calculus is not $($or, better, only to some 
extend$)$ suited for our purpose. 

\section{A calculus of parameter-dependent operators on $\rz^n$}\label{sec:3}

In $\rz^n$ we have a ono-to-one correspondence between pseudodifferential operators $A=\op(a)$ and 
their symbols. For this reason we shall mainly work on the level of symbols. The composition of operators 
corresponds to the Leibniz product of symbols, defined by the (oscillatory) integral
 $$(a\#b)(x,\xi)=\iint e^{-iy\eta}a(x,\xi+\eta)b(x+y,\xi)\,dy\dbar\eta,$$
while taking the $($formal$)$ adjoint of $A$ with respect to the $L_2$-scalar product corresponds to passing to the 
symbol 
 $$a^{(*)}(x,\xi)=\iint e^{-iy\eta}a(x+y,\xi)^*\,dy\dbar\eta.$$
Note that both maps 
$(a,b)\mapsto a\#b:S^{\mu_1}(N_1,N_2)\times S^{\mu_0}(N_0,N_1)\to S^{\mu_0+\mu_1}(N_0,N_2)$ 
and 
$a\mapsto a^{(*)}:S^\mu(N_0,N_1)\to S^\mu(N_1,N_0)$ are continuous. 

\subsection{A first calculus of parameter-dependent symbols}\label{sec:3.1}

\begin{definition}\label{def:calc}
With real $\mu$ or $\mu=-\infty$ let $\bfS^\mu(\Lambda)=\bfS^\mu(\Lambda;N_0,N_1)$ denote 
the space of all continuous and bounded functions $a:\Lambda\to S^\mu$ for whom there exists a continuous function 
$a^\infty:[a,b]\to S^\mu$ such that 
\begin{equation}\label{eq:limit}
 a(\tau,\cdot)\xrightarrow{\tau\to\infty}a^\infty\quad\text{ in }\scrC([a,b],S^{\mu+1}) 
\end{equation}
$($note the order $\mu+1$ in \eqref{eq:limit}$)$. We call $a^\infty$ the \emph{limit-family} of $a$. 
By $\bfS^\mu_0(\Lambda)$ we denote the subspace of symbols with vanishing limit-family.
\end{definition}

Obviously $S^\mu\subset \bfS^\mu(\Lambda)$. The calculus is closed under composition (Leibniz product)  and 
taking the  adjoint. The limit-family behaves multiplicative under composition. Composition with the symbol 
$\spk{\xi}^\nu$ (both from the left or the right) yields isomorphisms $\bfS^\mu(\Lambda)\to \bfS^{\mu+\nu}(\Lambda)$. 

Let us remark that in the previous definition it would be equivalent to ask only for 
$a^\infty\in\scrC([a,b],S^{\mu+1})$, since it then follows that $a^\infty\in\scrC([a,b],S^{\mu})$. 
In fact, this is true, since we can identify $\scrC([a,b],S^{\nu})$ with $S^\nu(E)$ for $E=\scrC([a,b])$ and 
then use following general observation:  

\begin{lemma}\label{lem:conv}
Let $E$ be a Fr\'{e}chet space and let $(a_n)$ be a bounded sequence in $S^\mu(E)$ that converges in 
$S^{\mu+1}(E)$ to $a^\infty$. Then $a^\infty\in S^{\mu}(E)$.  
\end{lemma}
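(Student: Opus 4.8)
The plan is to reduce the statement to a pointwise estimate for $a^\infty$ and its derivatives, obtained by combining the uniform bounds coming from boundedness in $S^\mu(E)$ with the pointwise convergence coming from convergence in $S^{\mu+1}(E)$. Recall that the topology of $S^\nu(E)$ is generated by the seminorms $q^{(\nu)}_{\alpha\beta p}(a)=\sup_{x,\xi}\spk{\xi}^{|\alpha|-\nu}\,p\big(D^\alpha_\xi D^\beta_x a(x,\xi)\big)$, where $\alpha,\beta$ range over multi-indices and $p$ over the continuous seminorms of $E$. Boundedness of $(a_n)$ in $S^\mu(E)$ then means precisely that $M_{\alpha\beta p}:=\sup_n q^{(\mu)}_{\alpha\beta p}(a_n)<\infty$ for every $\alpha,\beta,p$, which is the estimate we wish to transfer to $a^\infty$.

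First I would fix $\alpha,\beta,p$ and a point $(x,\xi)$. Convergence of $(a_n)$ to $a^\infty$ in $S^{\mu+1}(E)$ forces, in particular, $q^{(\mu+1)}_{\alpha\beta p}(a_n-a^\infty)\to 0$, and hence $D^\alpha_\xi D^\beta_x a_n(x,\xi)\to D^\alpha_\xi D^\beta_x a^\infty(x,\xi)$ in $E$ for each fixed $(x,\xi)$; in particular $a^\infty$ is smooth, being an element of $S^{\mu+1}(E)$. Since $p$ is continuous on $E$, this yields $p\big(D^\alpha_\xi D^\beta_x a_n(x,\xi)\big)\to p\big(D^\alpha_\xi D^\beta_x a^\infty(x,\xi)\big)$. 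Now each term on the left-hand side is bounded by $M_{\alpha\beta p}\spk{\xi}^{\mu-|\alpha|}$ uniformly in $n$, so the limit inherits the same bound, namely $p\big(D^\alpha_\xi D^\beta_x a^\infty(x,\xi)\big)\le M_{\alpha\beta p}\spk{\xi}^{\mu-|\alpha|}$. Taking the supremum over $(x,\xi)$ gives $q^{(\mu)}_{\alpha\beta p}(a^\infty)\le M_{\alpha\beta p}$, and since $\alpha,\beta,p$ were arbitrary we conclude $a^\infty\in S^\mu(E)$.

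The argument is essentially the lower semicontinuity of seminorms under pointwise limits together with a uniform bound, so I do not expect a genuine analytic obstacle. The only point that deserves care is that convergence in the weaker class $S^{\mu+1}(E)$ is used solely to extract the pointwise convergence of every derivative in $E$; the weight gap between $\mu$ and $\mu+1$ plays no active role, and in fact plain pointwise convergence of all derivatives together with the boundedness hypothesis would already suffice. I would only flag that the passage to the limit takes place inside the $E$-valued seminorm $p$, which is legitimate precisely because $p$ is continuous and the convergence $D^\alpha_\xi D^\beta_x a_n(x,\xi)\to D^\alpha_\xi D^\beta_x a^\infty(x,\xi)$ holds in $E$.
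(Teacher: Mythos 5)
Your proof is correct and follows essentially the same route as the paper's: smoothness of $a^\infty$ is immediate (the paper gets it from the embedding $S^\mu(E)\hookrightarrow\scrC^\infty(\rz^n\times\rz^n,E)$, you from $a^\infty\in S^{\mu+1}(E)$), and the symbol estimates of order $\mu$ are transferred to $a^\infty$ by passing to the limit in the seminorm $p$ pointwise in $(x,\xi)$, using the uniform bounds from boundedness in $S^\mu(E)$. Your write-up is simply a more detailed version of the paper's two-line argument, including the same observation that the gap between $\mu$ and $\mu+1$ is used only to secure pointwise convergence of all derivatives.
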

\begin{proof}
Since $S^\mu(E)\hookrightarrow\scrC^\infty(\rz^n\times\rz^n,E)$ continuously, $a^\infty$ is a smooth function 
with values in $E$. Moreover, for any semi-norm $p$ of $E$ we have 
 $$p\big(D^\alpha_\xi D^\beta_x a^\infty(x,\xi)\big)\xleftarrow{n\to\infty}
     p\big(D^\alpha_\xi D^\beta_x a_n(x,\xi)\big)\le C\spk{\xi}^{\mu-|\alpha|}$$
with a constant $C$ independent of $(x,\xi)$ and $n$. This yields the claim. 
\end{proof}

The next lemma states that asymptotic summation is possible within the calculus. 

\begin{lemma}\label{lem:asymptotic}
Given $a_{\mu-k}\in \bfS^{\mu-k}(\Lambda)$, 
$k\in\nz_0$, there exists an $a\in \bfS^\mu(\Lambda)$ such that 
$a-\sum\limits_{k=0}^{\ell-1}a_{\mu-k}$ belongs to $\bfS^{\mu-\ell}(\Lambda)$ for any $\ell$.
\end{lemma}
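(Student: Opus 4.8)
The plan is to run a Borel-type construction: excise the low-frequency part of each $a_{\mu-k}$ by a cut-off whose support is pushed out to infinity fast enough, and then sum the truncated pieces. Fix $\chi\in\scrC^\infty(\rz^n)$ with $\chi\equiv 0$ on $|\xi|\le 1$ and $\chi\equiv 1$ on $|\xi|\ge 2$, and for a sequence $c_k\nearrow\infty$ still to be chosen set $b_k(x,\xi,\lambda)=\chi(\xi/c_k)\,a_{\mu-k}(x,\xi,\lambda)$. The candidate is $a=\smsum_{k=0}^\infty b_k$, with candidate limit-family $a^\infty=\smsum_{k=0}^\infty\chi(\xi/c_k)\,a_{\mu-k}^\infty$. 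The mechanism behind the truncation is the elementary gain: on $\mathrm{supp}\,\chi(\cdot/c_k)$ one has $\spk{\xi}\ge c_k$, so for any fixed multi-index orders the $S^{\mu-k+1}$-seminorms of $b_k$ are $O(c_k^{-1})$, and measuring $b_k$ in $S^{\mu-k+j}$ instead of $S^{\mu-k}$ gains a factor $O(c_k^{-j})$; the same holds for $\chi(\xi/c_k)a_{\mu-k}^\infty$.

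First I would fix $c_k$ by a diagonal argument so that, for all $|\alpha|,|\beta|\le k$, two families of quantities are $\le 2^{-k}$: (I) the $S^{\mu-k+1}$-seminorm of $b_k$, uniformly over $\lambda\in\Lambda$, together with the corresponding seminorm of $\chi(\xi/c_k)a_{\mu-k}^\infty$; and (II) the $S^{\mu+1}$-seminorm of $\chi(\xi/c_k)\big(a_{\mu-k}(\tau,\theta)-a_{\mu-k}^\infty(\theta)\big)$, uniformly over $(\tau,\theta)\in\Lambda$. Both are achievable because each individual seminorm tends to $0$ as $c_k\to\infty$: for (I) by the order gain just described, and for (II) because the difference $a_{\mu-k}(\tau,\theta)-a_{\mu-k}^\infty(\theta)$ is bounded in $S^{\mu-k}$ uniformly in $\lambda$ (being a difference of two symbols each bounded in $S^{\mu-k}$), so the extra factor $\spk{\xi}^{-(k+1)}\le c_k^{-(k+1)}$ available on the support drives its $S^{\mu+1}$-seminorm below any threshold.

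Next I would read off the consequences of (I). It makes $\smsum_k b_k$ converge absolutely in every $S^\mu$-seminorm uniformly over $\Lambda$; since each $b_k$ is continuous and bounded as a map $\Lambda\to S^\mu$, so is the sum, and $a^\infty$ converges in $\scrC([a,b],S^\mu)$. For the remainder I would split
$$a-\smsum_{k=0}^{\ell-1}a_{\mu-k}=\smsum_{k\ge\ell}b_k+\smsum_{k=0}^{\ell-1}\big(\chi(\xi/c_k)-1\big)a_{\mu-k};$$
the last sum is finite and each summand has compact $\xi$-support, hence lies in $\bfS^{-\infty}(\Lambda)$, while for $k\ge\ell+1$ the $S^{\mu-\ell}$-seminorms of $b_k$ are dominated by its $S^{\mu-k+1}$-seminorms (as $\mu-k+1\le\mu-\ell$) and are therefore $\le 2^{-k}$ for $|\alpha|,|\beta|\le k$; thus $\smsum_{k\ge\ell}b_k$ converges in $\bfS^{\mu-\ell}(\Lambda)$ after the single term $b_\ell$ is added.

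Finally, the delicate point — and the one I expect to be the main obstacle — is verifying that $a$ genuinely lies in $\bfS^\mu(\Lambda)$ in the sense of Definition~\ref{def:calc}, i.e.\ that $a(\tau,\cdot)\to a^\infty$ holds in $\scrC([a,b],S^{\mu+1})$ and not merely in each $S^\mu$-seminorm; this forces an interchange of the limit $\tau\to\infty$ with the infinite summation. I would write $a(\tau,\theta)-a^\infty(\theta)=\smsum_k\chi(\xi/c_k)\big(a_{\mu-k}(\tau,\theta)-a_{\mu-k}^\infty(\theta)\big)$ and, given $\eps>0$ and a seminorm, split at some $K$: by (II) the tail is bounded by $\smsum_{k\ge K}2^{-k}<\eps/2$ uniformly in $(\tau,\theta)$, while the finite head $\smsum_{k<K}$ tends to $0$ in $\scrC([a,b],S^{\mu+1})$ as $\tau\to\infty$, since each $a_{\mu-k}(\tau,\cdot)\to a_{\mu-k}^\infty$ in $\scrC([a,b],S^{\mu-k+1})\hookrightarrow\scrC([a,b],S^{\mu+1})$ by hypothesis. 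Lemma~\ref{lem:conv} then upgrades $a^\infty$ to an element of $\scrC([a,b],S^\mu)$, completing the verification that $a\in\bfS^\mu(\Lambda)$.
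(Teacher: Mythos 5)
Your construction is correct and is at heart the same as the paper's own proof: the paper also forms $\sum_k \chi(c_k\xi)\,a_{\mu-k}$ with a zero-excision function whose support is pushed out to infinity (there $(c_k)$ is a zero sequence and the cut-off is $\chi(c_k\xi)$, which is your $\chi(\xi/c_k)$ with $c_k\nearrow\infty$). The genuine difference lies in how the exchange of $\sum_k$ with $\lim_{\tau\to\infty}$ is organized. The paper absorbs it into the functional-analytic setup: it requires the tails $\sum_{k\ge\ell}\chi(c_k\xi)a_{\mu-k}$ to converge in $S^{\mu+1-\ell}(E_1)$ for \emph{every} $\ell$, where $E_1=\scrC(K\times[a,b])$ and $K=\rz_+\cup\{\infty\}$ is the one-point compactification; convergence in these spaces means uniformity in $\tau$ up to and including $\tau=\infty$, and since evaluation at $\tau=\infty$ is continuous $E_1\to\scrC([a,b])$, the limit-family of each tail is automatically the sum of the limit-families, at all levels $\ell$ simultaneously. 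You perform the same exchange by hand, via diagonal $2^{-k}$ bounds and an $\eps/2$-splitting; this is more elementary and needs no auxiliary vector-valued symbol spaces, at the cost of running the splitting explicitly.

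One point you should tighten. You carry out the splitting only for $\ell=0$, but the lemma requires each remainder $\sum_{k\ge\ell}b_k$ (plus the compactly supported correction terms) to lie in $\bfS^{\mu-\ell}(\Lambda)$, and by Definition \ref{def:calc} that includes existence of its limit-family with convergence in $\scrC([a,b],S^{\mu-\ell+1})$; your third paragraph checks only convergence of the $S^{\mu-\ell}$-seminorms, which is not the full membership condition. No new idea is needed to close this: your condition (I) bounds the $S^{\mu-k+1}$-seminorms of both $b_k$ and $\chi(\xi/c_k)a^\infty_{\mu-k}$ by $2^{-k}$, hence those of their difference by $2^{1-k}$, and since $S^{\mu-k+1}$-seminorms dominate $S^{\mu-\ell+1}$-seminorms whenever $k\ge\ell$, the tail estimate holds uniformly in $(\tau,\theta)$ at every level $\ell$ (this also shows that your condition (II) is already implied by (I)); the head is a finite sum whose terms converge in $\scrC([a,b],S^{\mu-k+1})\hookrightarrow\scrC([a,b],S^{\mu-\ell+1})$ by hypothesis. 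So the argument of your final paragraph runs verbatim for each $\ell$. The paper's compactification device is precisely what makes this ``for all $\ell$'' bookkeeping invisible.
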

\begin{proof}
Let $K$ be $\rz_+\cup\{\infty\}$ the one-point completion of $\rz_+$ at infinity and  
\begin{align*}
 E_0=\scrC_b(\Lambda),\qquad E_1=\scrC(K\times[a,b]), 
\end{align*}
where $\scrC_b(\Lambda)$ is the space of continuous and bounded functions on $\Lambda$. 
With a fixed zero excision function $\chi(\xi)$ we can choose a zero sequence $(c_k)$ such that any series 
 $$b_\ell(x,\xi,\lambda):=\sum_{k=\ell}^\infty \chi(c_k\xi)a_{\mu-k}(x,\xi,\lambda),\qquad \ell\in\nz_0,$$
converges both in $S^{\mu-\ell}(E_0)$ and $S^{\mu+1-\ell}(E_1)$ for any $\ell$. Since taking the limit 
$($i.e., evaluation in $\tau=\infty)$ is a continuous map $E_1\to E_2:=\scrC([a,b])$, we see that 
 $$\lim_{\tau\to\infty}b_\ell(x,\xi,\tau,\theta)=\sum_{k=\ell}^\infty \chi(c_k\xi)a_{\mu-k}^\infty(x,\xi,\theta)$$
with convergence in $S^{\mu-\ell+1}(E_2)$. However we can modify (i.e., diminish) the $c_k$ in such a way that 
the above convergences remain valid and, additionally, the last series converges in $C([a,b],S^{\mu-\ell})$ for any 
$\ell$. Hence we can take $a=b_0$. 
\end{proof}

\begin{example}\label{ex:1}
Let $a:\Lambda\to S^\mu$ be continuous and bounded. Assume that $a$ considered as a function with values 
in $S^{\mu+1}$ is continuously differentiable with respect to $\tau$ and that there exists a $\delta>0$ such that 
$(1+\tau)^{1+\delta}\partial_\tau a$ is bounded. Then $a\in \bfS^\mu(\Lambda)$. In fact, \eqref{eq:limit} holds 
true if we set 
 $$\displaystyle a^\infty=a(1)+\int_1^\infty \partial_\tau a(\tau,\cdot)\,d\tau$$
$($observe Lemma $\mathrm{\ref{lem:conv}}$ and the comment given before$)$. 
\end{example}

\begin{example}\label{ex:2}
Let $a\in S^0_\cl(\Lambda)$ be a classical symbol of order $\mu=0$. Then $a$ belongs to $\bfS^0(\Lambda)$ and 
has limit-family 
 $$a^\infty(x,\theta)=a^{(0)}(x,0,1,\theta)\qquad\text{$($independent of $\xi)$},$$
where $a^{(0)}$ is the homogeneous principal symbol of $a$. In fact, first it is easy to see that 
analogous symbols of order $-1$ belong to $\bfS^0(\Lambda)$ and have vanishing limit-family. Thus we can 
assume that $a$ is homogeneous in the large, i.e., satisfies 
  $$a(x,t\xi,t\tau,\theta)=a(x,\xi,\tau,\theta)\qquad \forall\;t\ge 1\quad\forall\; |(\xi,\tau)|\ge 1.$$
Writing $a(x,\xi,\tau,\theta)=a(x,\xi/\tau,1,\theta)$ for $\tau\ge1$, we 
see that the limit-family must be as stated if it exists. Moreover, 
 $$\tau^2\partial_\tau a(x,\xi,\tau,\theta)
    =-\smsum_{j=1}^n\xi_j(\partial_{\xi_j}a)(x,\xi/\tau,1,\theta) 
    =-\smsum_{j=1}^n\xi_j\tau(\partial_{\xi_j}a)(x,\xi,\tau,\theta).$$
Using that $\partial_{\xi_j}a\in S^{-1}_\cl(\Lambda)$ it follows easily that $\tau^2\partial_\tau a$ is bounded with 
values in $S^1$ and we can apply the previous example. 
\end{example}

\subsubsection{Parameter-ellipticity and invertibility for large parameters}\label{sec:3.2}

Let us now discuss the concept of ellipticity for the classes $\bfS^\mu(\Lambda)$. Throughout this subsection we 
assume $N_0=N_1=N$ for some $N\in\nz$. Let us first observe the following: 

\begin{proposition}\label{prop:1-R}
Let $r\in \bfS^{-\infty}_0(\Lambda)$. Then there exists an $s\in \bfS^{-\infty}_0(\Lambda)$ such that 
 $$(1+r(\lambda))\#(1+s(\lambda))=(1+s(\lambda))\#(1+r(\lambda))=1$$
provided $|\lambda|$ is large enough. In particular, $1+r(\lambda)$ is invertible in $S^0$ for large $|\lambda|$. 
\end{proposition}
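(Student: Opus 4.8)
The plan is to invert $1+r$ first as a bounded operator on $L^2$ and then to recover its symbolic and parameter-dependent structure. Since $r\in\bfS^{-\infty}_0(\Lambda)$ has vanishing limit-family, the convergence in \eqref{eq:limit} (which here is of order $-\infty$) gives $r(\tau,\cdot)\to0$ in $\scrC([a,b],S^{-\infty})$ as $\tau\to\infty$; in particular every fixed $S^0$-seminorm of $r(\lambda)$ tends to $0$ uniformly in $\theta$. As passing to $\op(\cdot)$ is continuous from $S^0$ into $\scrL(L^2)$ (Calder\'on--Vaillancourt), this yields $\|\op(r(\lambda))\|_{\scrL(L^2)}\to0$, so there is an $R>0$ with $\|\op(r(\lambda))\|_{\scrL(L^2)}\le\tfrac12$ whenever $|\lambda|\ge R$; for such $\lambda$ the operator $1+\op(r(\lambda))$ is invertible on $L^2$.

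Next I would recover the symbol of the inverse. By the inverse-closedness (spectral invariance) of the $\#$-algebra of zero-order symbols --- that is, the fact that $S^0$ is a $\Psi^*$-algebra inside $\scrL(L^2)$ --- invertibility of $1+\op(r(\lambda))$ in $\scrL(L^2)$ forces the inverse to be of the form $1+\op(s(\lambda))$ with $s(\lambda)\in S^0$. Once $s(\lambda)\in S^0$ is known, the defining relation $s(\lambda)=-r(\lambda)-r(\lambda)\#s(\lambda)$ has right-hand side in $S^{-\infty}$, because $r(\lambda)\in S^{-\infty}$ and therefore $r(\lambda)\#s(\lambda)\in S^{-\infty}$; hence in fact $s(\lambda)\in S^{-\infty}$. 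The left inverse is obtained symmetrically and agrees with $s(\lambda)$ by associativity of $\#$, which gives the two-sided identities of the proposition for $|\lambda|\ge R$.

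I expect the heart of the matter to be exactly this inverse-closedness step. The formal Neumann series $s\sim\sum_{k\ge1}(-1)^k r^{\#k}$ does solve $(1+r)\#(1+s)=1$ order by order --- the $k$-th partial sum $s_k=\sum_{j=1}^{k}(-1)^j r^{\#j}$ leaves the remainder $(1+r)\#(1+s_k)=1+(-1)^k r^{\#(k+1)}$ --- but for a single fixed $\lambda$ this series need not converge in the Fr\'echet topology of $S^{-\infty}$, since the symbol seminorms needed to estimate the iterated Leibniz products $r^{\#(k+1)}$ involve increasingly many derivatives as $k$ grows. The resolution is to interlace a fixed finite set of symbol seminorms with the small $L^2$-operator norm of $\op(r(\lambda))$ in the estimate of the iterated products; this is precisely what the spectral-invariance argument accomplishes, and it is the only genuinely nontrivial input.

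It remains to check $s\in\bfS^{-\infty}_0(\Lambda)$. Continuity and local boundedness of $\lambda\mapsto s(\lambda)\in S^{-\infty}$ follow from the continuity of $\lambda\mapsto r(\lambda)$ together with the continuity of $\#$ and of inversion on the range $|\lambda|\ge R$. For the limit-family, the relation $s(\lambda)=-r(\lambda)-r(\lambda)\#s(\lambda)$ together with the uniform boundedness of $s(\lambda)$ and the continuity of $\#$ shows that each $S^{-\infty}$-seminorm of $s(\tau,\cdot)$ is controlled by the corresponding seminorm of $r(\tau,\cdot)$, hence tends to $0$ uniformly in $\theta$; thus $s(\tau,\cdot)\to0$ in $\scrC([a,b],S^{-\infty})$ and the limit-family of $s$ vanishes. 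Finally, since the construction only defines $s$ for $|\lambda|\ge R$, I would multiply by a cut-off $\omega(\tau)$ that vanishes for small $\tau$ and equals $1$ for large $\tau$; this keeps $s$ in $\bfS^{-\infty}_0(\Lambda)$ and preserves the two-sided identities for $|\lambda|$ sufficiently large.
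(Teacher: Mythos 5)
Your proposal is correct and rests on essentially the same key input as the paper: the $\Psi^*$-algebra/spectral-invariance property of $S^0$ (Gramsch, Waelbroeck), followed by the same bootstrap identity (your $s=-r-r\#s$ versus the paper's $(1+r)^{-1}=1-r+r\#(1+r)^{-1}\#r$) to see that the inverse differs from $1$ by a smoothing symbol with vanishing limit-family, and a cut-off in $\lambda$ to globalize. The only cosmetic difference is that you enter through invertibility in $\scrL(L^2)$ (Calder\'on--Vaillancourt plus a Neumann series) and then apply inverse-closedness, whereas the paper directly invokes openness of the invertible elements and continuity of inversion in $S^0$ --- which is the same $\Psi^*$-property.
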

\begin{proof}
Let us recall that the set of invertible elements in $S^0$ form an open set and that inversion is a continuous 
map. Since $r(\lambda)\to 0$ in $S^0$ for 
$|\lambda|\to\infty$, we can conclude the existence of a $C$ such that $(1-r(\lambda))^{-1}$ exists in $S^0$ 
for $|\lambda|\ge C$ and is continuous and bounded as a function of $\lambda$. Whenever the inverse exists,  
 $$(1+r(\lambda))^{-1}=1-r(\lambda)+r(\lambda)\#(1+r(\lambda))^{-1}\# r(\lambda).$$
Thus if $\chi(t)$ is a zero-excision function vanishing for $t\le C$ then 
 $$s(\lambda)=-r(\lambda)+\chi(\tau)r(\lambda)\#(1+r(\lambda))^{-1}\#r(\lambda)$$
is the desired element of $\bfS^{-\infty}_0(\Lambda)$. 
\end{proof}

We shall call $a\in \bfS^\mu(\Lambda)$ \emph{parameter-elliptic} provided the following two conditions hold: 
\begin{itemize}
 \item[$($I$)$] There exists a $C\ge0$ such that $a(x,\xi,\lambda)$ is invertible whenever $|\xi|\ge C$ and 
  $|a(x,\xi,\lambda)^{-1}|\spk{\xi}^\mu$ is uniformly bounded in $x\in\rz^n$,   $\lambda\in\Lambda$, and 
  $|\xi|\ge C$. 
 \item[$($II$)$] The limit family $a^\infty$ is invertible in $S^\mu$ $($pointwise for each $\theta\in[a,b])$. 
\end{itemize}

Note that in $($II$)$ we could equivalently ask that $a^\infty$ is pointwise invertible as a map 
$H^s(\rz^n)\to H^{s-\mu}(\rz^n)$ for some $s\in\rz$.    

\begin{proposition}
Let $a\in \bfS^\mu(\Lambda)$ be parameter-elliptic as described above. Then we can choose a zero excision 
function $\chi(\xi)$ vanishing for $|\xi|\le C$ such that $b(x,\xi,\lambda):=\chi(\xi)a(x,\xi,\lambda)^{-1}$
belongs to $\bfS^{-\mu}(\Lambda)$ and both $a(\lambda)\#b(\lambda)-1$ and $b(\lambda)\#a(\lambda)-1$ belong 
to $\bfS^{-1}(\Lambda)$. 
\end{proposition}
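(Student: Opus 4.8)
The plan is to take $b$ to be the fibrewise (pointwise in $\lambda$) inverse of $a$, cut off away from the region where invertibility can fail, and then to verify the two defining properties of $\bfS^{-\mu}(\Lambda)$ separately: membership in $S^{-\mu}$ for each fixed $\lambda$ with bounds uniform in $\lambda$, and the existence of the correct limit-family. First I would fix a zero-excision function $\chi$ with $\chi(\xi)=0$ for $|\xi|\le C$ and $\chi(\xi)=1$ for $|\xi|\ge C+1$, with $C$ as in condition (I), so that $b(x,\xi,\lambda)=\chi(\xi)a(x,\xi,\lambda)^{-1}$ is everywhere defined and smooth. For fixed $\lambda$ the standard symbol-level parametrix argument applies: differentiating $aa^{-1}=1$ (valid on $\{\chi\equiv1\}$) gives $\partial_{\xi_j}a^{-1}=-a^{-1}(\partial_{\xi_j}a)a^{-1}$, and an induction on the order of derivatives, using $a\in S^\mu$ together with the bound $|a^{-1}|\le C_0\spk{\xi}^{-\mu}$ from (I), yields the full $S^{-\mu}$-estimates for $a^{-1}$; on the compact shell $C\le|\xi|\le C+1$ all quantities are controlled by the same data. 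Since both the $S^\mu$-seminorms of $a$ and the constant $C_0$ are uniform in $\lambda$ by hypothesis, these estimates are uniform in $\lambda$, so $b\colon\Lambda\to S^{-\mu}$ is bounded; continuity in $\lambda$ follows from continuity of $a$ and of inversion, after an order reduction by $\spk{\xi}^{\mu}$ as in the proof of Proposition \ref{prop:1-R}.

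For the limit-family I would take $b^\infty:=\chi\,(a^\infty)^{-1}$. This is meaningful already from condition (I): passing to the limit $\tau\to\infty$ in the uniform bound shows that $a^\infty(x,\xi,\theta)$ is invertible for $|\xi|\ge C$ with $|(a^\infty)^{-1}|\le C_0\spk{\xi}^{-\mu}$, so the same parametrix argument gives $b^\infty\in\scrC([a,b],S^{-\mu})$ (in particular condition (II) is not needed for this rough parametrix, and enters only later to treat the low-frequency region where $\chi=0$). To verify \eqref{eq:limit} for $b$ I would use, on $\{\chi\not=0\}$, the identity
$$b(\tau,\cdot)-b^\infty=\chi\,a(\tau,\cdot)^{-1}\big(a^\infty-a(\tau,\cdot)\big)(a^\infty)^{-1},$$
where $a(\tau,\cdot)^{-1}$ and $(a^\infty)^{-1}$ are bounded in $S^{-\mu}$ while, by hypothesis, $a(\tau,\cdot)\to a^\infty$ in $\scrC([a,b],S^{\mu+1})$. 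Since pointwise multiplication maps $S^{-\mu}\times S^{\mu+1}\times S^{-\mu}\to S^{-\mu+1}$, the right-hand side tends to $0$ in $\scrC([a,b],S^{-\mu+1})$, which is exactly the convergence of order $-\mu+1$ demanded in Definition \ref{def:calc}. Hence $b\in\bfS^{-\mu}(\Lambda)$.

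Finally, for the remainders I would argue for $a\#b-1$, the case $b\#a-1$ being symmetric. Pointwise one has $a\cdot b=\chi(aa^{-1})=\chi$, and the first term of the Leibniz expansion shows $a\#b-ab\in S^{-1}$ uniformly in $\lambda$; since $\chi-1$ is compactly supported in $\xi$, it follows that $a\#b-1\in S^{-1}$ with uniform bounds. Because the calculus is closed under $\#$ with multiplicative limit-family, $a\#b\in\bfS^0(\Lambda)$ with limit-family $a^\infty\#b^\infty$, and the same pointwise computation gives $a^\infty\#b^\infty-1\in S^{-1}$; thus $a\#b-1\in\bfS^0(\Lambda)$ converges to its limit-family in $\scrC([a,b],S^{1})$, with both its values and its limit bounded in $S^{-1}$. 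The step I expect to be the main obstacle is upgrading this to the convergence of order $0$ required for membership in $\bfS^{-1}(\Lambda)$. Here I would use the elementary pointwise identity
$$|D^\alpha_\xi D^\beta_x f|\,\spk{\xi}^{|\alpha|}=\Big(|D^\alpha_\xi D^\beta_x f|\,\spk{\xi}^{|\alpha|+1}\Big)^{1/2}\Big(|D^\alpha_\xi D^\beta_x f|\,\spk{\xi}^{|\alpha|-1}\Big)^{1/2},$$
which, upon taking suprema, bounds each $S^0$-seminorm by the geometric mean of the corresponding $S^1$- and $S^{-1}$-seminorms. Applied to $a\#b-1$ minus its limit-family — a family bounded in $S^{-1}$ that converges to $0$ in $S^1$ — this yields convergence to $0$ in $S^0$ and hence $a\#b-1\in\bfS^{-1}(\Lambda)$.
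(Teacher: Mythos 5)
Your proof is correct, and its core --- the cut-off inverse $b=\chi a^{-1}$ together with the identity $b-b^\infty=\chi a^{-1}(a^\infty-a)(a^\infty)^{-1}$ and the product estimate sending $S^{-\mu}\times S^{\mu+1}\times S^{-\mu}$ into $S^{-\mu+1}$ --- is exactly the paper's argument. You deviate in two places, both legitimately. First, you obtain the pointwise invertibility of $a^\infty(x,\xi,\theta)$ for $|\xi|\ge C$, with the bound $|(a^\infty)^{-1}|\le C_0\spk{\xi}^{-\mu}$, by letting $\tau\to\infty$ in condition (I); the paper instead derives it from condition (II) via continuity of inversion in $S^0$ (the $\Psi^*$-algebra property). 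Your route is more elementary and shows that the proposition already holds under (I) alone, with (II) entering only in Theorem \ref{thm:parametrix1} to remove the limit-family of the remainder --- a correct and worthwhile observation. Second, for the remainders you apply closure of the calculus under $\#$ at order $(0,0)$, which only gives convergence of $a\#b-1$ to its limit-family in $S^{1}$, and then upgrade to the required $S^{0}$-convergence by interpolating each $S^0$-seminorm between the corresponding $S^1$- and $S^{-1}$-seminorms of a family that is $S^{-1}$-bounded and $S^1$-null; this interpolation inequality is valid and the step works. The paper avoids the interpolation altogether: it reads the explicit remainder formula as $r=\sum_{|\alpha|=1}(D^\alpha_\xi a)\#(\partial^\alpha_x b)$ with $D^\alpha_\xi a\in\bfS^{\mu-1}(\Lambda)$ and $\partial^\alpha_x b\in\bfS^{-\mu}(\Lambda)$, so the mixed-order composition property (orders add, limit-families multiply) places $r$, and hence $a\#b-1=r+(\chi-1)$, directly in $\bfS^{-1}(\Lambda)$. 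Besides being shorter, this also yields for free the continuity of $\lambda\mapsto(a\#b-1)(\lambda)$ with values in $S^{-1}$, which Definition \ref{def:calc} requires and which your interpolation step only provides with values in $S^{0}$; you can close this small residual point by noting that the same bilinear continuity of the remainder map that you already invoke for the uniform $S^{-1}$-bounds also gives $S^{-1}$-continuity in $\lambda$.
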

\begin{proof}
Without loss of generality $\mu=0$. Let $a^\infty\in S^0$ be the limit-family. The continuity of inversion in $S^0$ 
yields the existence of a $D\ge0$ such that $a^\infty(x,\xi,\theta)$ is invertible for $|\xi|\ge D$ with inverse uniformly 
bounded in $(x,\xi,\theta)$. By enlarging one constant or the other we may assume $C=D$. With  
$b^\infty(x,\xi,\theta)=\chi(\xi)a^\infty(x,\xi,\theta)^{-1}$ we have 
\begin{align*}
\begin{split}
 b(x,\xi,\tau,\theta)&-b^\infty(x,\xi,\theta)\\
 &=\chi(\xi)a(x,\xi,\tau,\theta)^{-1}\big(a^\infty(x,\xi,\theta)-a(x,\xi,\tau,\theta)\big)a^\infty(x,\xi,\theta)^{-1}.
\end{split}
\end{align*}
This implies that $b(\tau,\cdot)\xrightarrow{\tau\to\infty}b^\infty$ in $\scrC([a,b],S^1)$. Thus $b\in \bfS^0(\Lambda)$. 
The remaining claim follows by using the explicit formula for the remainder term $r:=a\#b-ab$, namely 
 $$r(x,\xi,\lambda)=\sum_{|\alpha|=1}
     \iint e^{-iy\eta}D^\alpha_\xi a(x,\xi+\eta,\lambda)\partial^\alpha_x b(x+y,\xi,\lambda)\,dy\dbar\eta,$$
and analogously for $b\#a-ba$. 
\end{proof}

\begin{theorem}\label{thm:parametrix1}
Let $a\in \bfS^\mu(\Lambda)$ be parameter-elliptic. Then there exists a $b\in \bfS^{-\mu}(\Lambda)$ such that 
both $a(\lambda)\#b(\lambda)-1$ and $b(\lambda)\#a(\lambda)-1$ belong to $\bfS^{-\infty}_0(\Lambda)$ and 
vanish for large enough $|\lambda|$. In particular, $a(\lambda)$ is invertible in $S^0$ for sufficiently large 
$|\lambda|$ with $a(\lambda)^{-1}=b(\lambda)$. 
\end{theorem}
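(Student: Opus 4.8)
The plan is to build the parametrix in three stages, starting from the rough one-sided parametrix of the previous Proposition and successively improving the remainder: first to order $-\infty$, then to vanishing limit-family, and finally to genuine vanishing for large $|\lambda|$. Since composition with $\spk{\xi}^\nu$ induces isomorphisms $\bfS^\mu(\Lambda)\to\bfS^{\mu+\nu}(\Lambda)$ preserving both ellipticity conditions, I may assume $\mu=0$. The previous Proposition supplies $b_0=\chi(\xi)a^{-1}\in\bfS^0(\Lambda)$ with $r_0:=1-a\# b_0$ and $\ell_0:=1-b_0\# a$ in $\bfS^{-1}(\Lambda)$. Since composition adds orders, $r_0^{\#k}\in\bfS^{-k}(\Lambda)$, and Lemma \ref{lem:asymptotic} lets me asymptotically sum the formal Neumann series to an $s\in\bfS^0(\Lambda)$ with $s-\sum_{k=0}^{\ell-1}r_0^{\#k}\in\bfS^{-\ell}(\Lambda)$ for every $\ell$; then $b_1:=b_0\# s$ satisfies $a\# b_1-1\in\bfS^{-\infty}(\Lambda)$. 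Performing the analogous construction on the left and using the standard argument that two one-sided parametrices coincide modulo the residual class, I obtain a single $b_1\in\bfS^0(\Lambda)$ that is a two-sided parametrix modulo $\bfS^{-\infty}(\Lambda)$.

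The crucial and \emph{genuinely new} step is to upgrade the remainder from $\bfS^{-\infty}(\Lambda)$ to $\bfS^{-\infty}_0(\Lambda)$, and this is exactly where hypothesis (II) enters. Writing $r_1:=a\# b_1-1\in\bfS^{-\infty}(\Lambda)$, its limit-family $r_1^\infty\in\scrC([a,b],S^{-\infty})$ need not vanish. By (II) and the continuity of inversion in $S^0$, the inverse $(a^\infty)^{-1}$ lies in $\scrC([a,b],S^0)$ and satisfies $a^\infty\#(a^\infty)^{-1}=(a^\infty)^{-1}\# a^\infty=1$. Extending $(a^\infty)^{-1}$ and $r_1^\infty$ constantly in $\tau$ produces $\tilde e\in\bfS^0(\Lambda)$ and $\rho\in\bfS^{-\infty}(\Lambda)$ with limit-families $(a^\infty)^{-1}$ and $r_1^\infty$; set $b_2:=\tilde e\#\rho\in\bfS^{-\infty}(\Lambda)$. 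Using the multiplicativity of the limit-family under composition, together with the fact that $b_1^\infty=(a^\infty)^{-1}$ modulo $S^{-\infty}$ (because $b_1^\infty$ is a two-sided parametrix of the invertible $a^\infty$), one computes that $b:=b_1-b_2\in\bfS^0(\Lambda)$ still satisfies $a\# b-1,\,b\# a-1\in\bfS^{-\infty}(\Lambda)$ but now with vanishing limit-family, so that both remainders lie in $\bfS^{-\infty}_0(\Lambda)$; here the cancellation $(a^\infty)^{-1}\# a^\infty=1$ is what makes the single correction $b_2$ repair the left side as well as the right. I expect this limit-family correction — and the bookkeeping showing it does not reintroduce a positive-order remainder — to be the main obstacle, as it is the one place where the two ellipticity conditions (I) and (II) must be reconciled.

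Finally I remove the residual remainders for large $|\lambda|$. At this point $a\# b=1-r$ and $b\# a=1-\tilde r$ with $r,\tilde r\in\bfS^{-\infty}_0(\Lambda)$. Proposition \ref{prop:1-R} (applied to $-r$) yields $s\in\bfS^{-\infty}_0(\Lambda)$ with $(1-r)\#(1-s)=(1-s)\#(1-r)=1$ for $|\lambda|$ large; replacing $b$ by $b\#(1-s)$ then gives $a\# b=1$ for large $|\lambda|$, while the change $b\# s$ lies in $\bfS^{-\infty}_0(\Lambda)$, so $b$ stays in $\bfS^0(\Lambda)$ and the left remainder stays in $\bfS^{-\infty}_0(\Lambda)$. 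Applying Proposition \ref{prop:1-R} to $-\tilde r$ shows that $a$ also has a left inverse in $S^0$ for large $|\lambda|$; a one-sided right inverse and a one-sided left inverse of an element must agree, so $a(\lambda)$ is invertible in $S^0$ with $a(\lambda)^{-1}=b(\lambda)$ for large $|\lambda|$, and both $a\# b-1$ and $b\# a-1$ belong to $\bfS^{-\infty}_0(\Lambda)$ and vanish there. Undoing the order reduction via composition with $\spk{\xi}^{\pm\mu}$ gives the assertion for general $\mu$.
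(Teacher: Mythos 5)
Your proposal is correct and follows essentially the same route as the paper's proof: a von Neumann parametrix modulo $\bfS^{-\infty}(\Lambda)$ constructed with Lemma \ref{lem:asymptotic}, then a correction by a $\tau$-constant smoothing term with limit-family $(a^\infty)^{-1}\#r_1^\infty$ (this is exactly the paper's correction $b^{\prime\prime}=b^\prime+(a^\infty)^{-1}r^\infty$, up to sign conventions), and finally Proposition \ref{prop:1-R} to kill the remainders for large $|\lambda|$. Your extra bookkeeping -- verifying that the single correction also repairs the left remainder, and the left-inverse/right-inverse argument at the end -- merely spells out steps the paper leaves implicit.
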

\begin{proof}
Using the previous proposition and the usual Neumann series argument together with Lemma \ref{lem:asymptotic} we 
can construct a parametrix $b^\prime$ modulo $\bfS^{-\infty}(\Lambda)$. 
With $r(\lambda)=1-a(\lambda)\#b^\prime(\lambda)$ we then 
have $r^\infty=1-a^\infty\#b^{\prime\infty}$. Hence, using (II),  
 $$b^{\prime\prime}(\tau,\theta):=b^\prime(\tau,\theta)+a^\infty(\theta)^{-1}r^\infty(\theta)$$
is a parametrix modulo $\bfS^{-\infty}_0(\Lambda)$. In fact, 
$b^{\prime\prime}-b^{\prime}\in \bfS^{-\infty}(\Lambda)$, since $r^\infty\in \scrC([a,b],S^{-\infty})$, and 
 $$(1-a\#b^{\prime\prime})^\infty
     =(1-a\#b^{\prime})^\infty-a^\infty(a^\infty)^{-1}r^\infty=r^\infty-r^\infty=0.$$
It remains to apply Proposition \ref{prop:1-R} to finally obtain the desired parametrix. 
\end{proof}

\begin{example}\label{ex:3}
Let $a\in S^0_\cl(\Lambda)$ be a classical symbol of order 0. Then parameter-ellipticity in the sense of 
$\mathrm{(I)}$ and $\mathrm{(II)}$ is equivalent to the usual parameter-ellipticity of classical symbols, i.e., the
principal symbol $a^{(0)}$ is everywhere invertible and there exists a $C$ such that 
 $$|a^{(0)}(x,\xi,\lambda)^{-1}|\le C\qquad x\in\rz^n,\quad |(\xi,\lambda)|=1.$$
Note that the latter estimate is only a condition for $|x|\to\infty$ and can be omitted for example when $a^{(0)}$ 
is constant in $x$ for large $|x|$. 
\end{example}

\subsection{Weakly classical symbols  and parameter-ellipticity}\label{sec:3.3}

The calculus described above is complete but still not suited for our purposes, since the ellipticity condition $($I$)$ 
not only asks the invertibility of a certain principal symbol but also requires an estimate on the inverted symbol. 
Our next aim is to single out a subcalculus in which $($I$)$ can be replaced by a condition avoiding such kind of 
estimates. This calculus contains $S^\mu_\cl$ as well as $S^\mu_\cl(\Lambda)$, the latter only in case of $-\mu$ 
being a non-negative integer.  

\subsubsection{A class of homogeneous symbols}\label{sec:3.3.1}

Let 
 $$\sz^n_+=\big\{(\bar\xi,\bar\tau)\,\vert\,\bar\xi\in\rz^n,\;\bar\tau\ge0,\;|(\bar\xi,\bar\tau)|=1\big\}$$
be the closed upper semi-sphere in ${\rz}^{n+1}$ and, with $\bfn:=(0,1)$ the ``north-pole", 
\begin{equation*}
 \whsz^n_+=\sz^n_+\setminus \{\bfn\}.
\end{equation*}
On $\whsz^n_+$ we shall make use of polar-coordinates 
 $$\bar{\xi}=\sin\rho\cdot\phi,\qquad \bar{\tau}=\cos\rho\qquad (\phi\in\sz^{n-1},\;0<\rho\le\pi/2),$$
respectively $\phi=\bar{\xi}/|\bar{\xi}|$ and $\rho=\arccos\bar{\tau}$. 
If $E$ is a Fr\'{e}chet space we let 
 $$\scrC^{\infty,\gamma}(\whsz^n_+,E), \qquad\gamma\in\rz,$$ 
denote the space of all smooth $E$-valued functions $\wh{a}$ on $\whsz^n_+$ having the property that 
$\rho^{-\gamma+\eps}(\rho\partial_\rho)^j\Delta_\phi^k \wh{a}$ is bounded on $\whsz^n_+$ for any choice 
of $j,k\in\nz_0$ and $\eps>0$. 

\begin{definition}\label{def:taylor1}
We say that $\wh{a}\in\scrC^{\infty}(\whsz^n_+,E)$ has \emph{$($generalized$)$ Taylor asymptotics} at the 
north-pole $\bfn$ if 
\begin{equation}\label{eq:taylor1}
 \wh{a}(\phi,\rho)\sim\sum_{j=0}^\infty \rho^j\,\wh{a}_j(\phi),\qquad \wh{a}_j\in\scrC^\infty(\sz^{n-1},E), 
\end{equation}
for suitable $\wh{a}_j$, where $\sim$ means that, for any $\ell\in\nz$,  
\begin{equation}\label{eq:taylor2}
 \wh{a}(\phi,\rho)-\sum_{j=0}^{\ell} \rho^j\,\wh{a}_j(\phi)\;\in\;\scrC^{\infty,\ell+1}(\whsz^n_+,E). 
\end{equation}
The space consisting of all such functions we denote by $\scrC^{\infty}_T(\whsz^n_+,E)$.
\end{definition}

\begin{remark}\label{rem:radial}
If $\wh{a}\in\scrC^{\infty}_T(\whsz^n_+,E)$ is as in \eqref{eq:taylor1} then 
 $$\wh{a}_0(\phi)=\lim_{\rho\to0}\wh{a}(\phi,\rho).$$
In particular, if $\wh{a}\in\scrC^{\infty}(\sz^n_+,E)$ is smooth on the whole upper semi-sphere then 
 $$\wh{a}_0(\phi)=\wh{a}(0,1)=\wh{a}(\bfn)$$
is constant in $\phi$ and coincides with the value of $\wh{a}$ in the north-pole $\bfn$.  
\end{remark}

We shall now choose, both in the above definitions as well as in the remaining part of this section, 
the space $E$ to be $\scrC\big([a,b]_\theta,\scrC^\infty_b(\rz^n_x)\big)$. 
The resulting $E$-valued symbols we shall consider as functions of the variables $(x,\xi,\tau,\theta)$. 
 
\begin{definition}\label{def:wcl}
With $\bfS^{(\mu)}(\Lambda)$, $\mu\in\rz$, we shall denote the space of all functions on 
$\rz^n_x\times(\rz^n_\xi\setminus\{0\})\times\Lambda$ of the form 
\begin{equation}\label{eq:taylor3}
 a^{(\mu)}(x,\xi,\tau,\theta)=|\xi|^\mu\,\wh{a}\Big(x,\frac{(\xi,\tau)}{|(\xi,\tau)|},\theta\Big),
 \qquad \wh{a}\in\scrC^{\infty}_T(\whsz^n_+,E).  
\end{equation}
\end{definition}

Let us mention two particular examples: First let $b(x,\xi)$ be positively homogeneous of degree $\mu$ in $\xi\not=0$. 
If we define $\wh{a}$ by $\wh{a}(x,\bar\xi,\bar\tau,\theta)=b(x,\bar\xi/|\bar\xi|)$ then 
$a^{(\mu)}(x,\xi,\tau,\theta)=b(x,\xi)$ with $a^{(\mu)}$ from \eqref{eq:taylor3}. Moreover, 
$\wh{a}\in\scrC^{\infty}_T(\whsz^n_+,E)$, since $\wh{a}(x,\phi,\rho,\theta)=b(x,\phi)$. 
In other words, homogeneous symbols not depending on the parameter $\lambda$ can be considered as 
parameter-dependent homogeneous symbols. If $b(x,\xi,\tau,\theta)$ is positively homogeneous of degree $\mu$ in 
$(\xi,\tau)\not=0$, then $b$ can be written as in \eqref{eq:taylor3} with 
$\wh{a}(x,\bar\xi,\bar\tau,\theta)=|\bar\xi|^{-\mu}b(x,\bar\xi,\bar\tau,\theta)$.  
Since the restriction of $b$ to the unit-sphere is a smooth function (including the north-pole) and 
$|\bar\xi|^{-\mu}=(\sin\rho)^{-\mu}$, we conclude that $b$ can be viewed as an element of 
$\bfS^{(\mu)}(\Lambda)$ provided $\mu$ is an integer less or equal to zero.  

It is evident from the definition that multiplication with $|\xi|^{\nu}$  induces isomorphisms 
\begin{equation}\label{eq:taylor3b}
 a^{(\mu)}(x,\xi,\tau,\theta)\mapsto |\xi|^{\nu}a^{(\mu)}(x,\xi,\tau,\theta):\;
 \bfS^{(\mu)}(\Lambda)\lra \bfS^{(\mu+\nu)}(\Lambda),\qquad \nu\in\rz.
\end{equation}

\begin{remark}\label{rem:restriction}
Restriction to $\whsz^n_+$ of functions defined on $(\rz^n_\xi\setminus\{0\})\times\rpbar$ yields a canonical identification 
of $\bfS^{(\mu)}(\Lambda)$ with the weighted space $\rho^{\mu}\scrC^{\infty}_T(\whsz^n_+,E)$. 
In fact, the inverse of this map is given by 
 $$\alpha(x,\bar\xi,\bar\tau,\theta)\mapsto a^{(\mu)}(x,\xi,\tau,\theta):=
     |(\xi,\tau)|^\mu\,\alpha\Big(x,\frac{(\xi,\tau)}{|(\xi,\tau)|},\theta\Big);$$
choosing $\wh{a}(x,\bar\xi,\bar\tau,\theta)=|\bar\xi|^{-\mu}\alpha(x,\bar\xi,\bar\tau,\theta)$ gives the 
representation of \eqref{eq:taylor3}.
\end{remark}

\begin{theorem}\label{thm:estimate}
Let $\chi$ be a zero-excision function, $a^{(\mu)}\in \bfS^{(\mu)}(\Lambda)$ and 
 $$a(x,\xi,\tau,\theta)=\chi(\xi)\,a^{(\mu)}(x,\xi,\tau,\theta).$$
\begin{itemize}
 \item[a$)$] $a$ satisfies the uniform estimates  
   \begin{equation}\label{eq:taylor4}
     |D^\alpha_\xi D^\beta_x D^k_\tau a(x,\xi,\tau,\theta)|\le C_{\alpha\beta k}\spk{\xi}^{\mu-|\alpha|}\spk{\xi,\tau}^{-k},
     \qquad \alpha,\beta\in\nz_0^n,\quad k\in\nz_0.
   \end{equation}
 \item[b$)$] $a$ belongs to $\bfS^\mu(\Lambda)$; if $\wh{a}$ from \eqref{eq:taylor3} has an expansion as in 
   \eqref{eq:taylor1} then the limit-family of $a$ is $a^\infty(x,\xi,\theta)=\chi(\xi)|\xi|^\mu\wh{a}_0(x,\xi/|\xi|,\theta)$. 
\end{itemize}
\end{theorem}
\begin{proof}
In view of \eqref{eq:taylor3b} we may assume that $\mu=0$. For convenience of notation we assume 
independence of the $x$ and $\theta$ variables. Let us prove part a$)$: 

$($i$)$ Consider the case $\wh{a}(\phi,\rho)=\rho$, i.e., 
 $$a(\xi,\tau)=\chi(\xi)\arccos\frac{\tau}{|(\xi,\tau)|}.$$
Choose a zero-excision function $\chi_1(\xi,\tau)$ such that $\chi_1\chi=\chi$. Then 
\begin{align*}
 \partial_\tau a(\xi,\tau)
 &=-\Big(1-\frac{\tau}{|(\xi,\tau)|}\Big)^{-1/2}\Big(|(\xi,\tau)|^{-1}-\tau^2|(\xi,\tau)|^{-3}\Big)\chi(\xi)\\
 &=\chi(\xi)|\xi|\cdot \chi_1(\xi,\tau)|(\xi,\tau)|^{-2}.
\end{align*}
The first factor is a symbol in $S^1$, while the second is a symbol in $S^{-2}(\rpbar)$. 
This yields \eqref{eq:taylor4} in case $k\ge1$. Moreover, 
\begin{equation*}
 \partial_{\xi_i}\arccos\frac{\tau}{|(\xi,\tau)|}=\frac{\xi_i}{|\xi|}\cdot\frac{\tau}{\tau^2+|\xi|^2}
\end{equation*}
is a product of a function positively homogeneous of degree 0 in $\xi$ and one of degree $-1$ in $(\xi,\tau)$. 
We obtain \eqref{eq:taylor4} in case $k=0$. By product rule then \eqref{eq:taylor4} also holds for 
$\wh{a}(\phi,\rho)=\rho^j$ with $j\in\nz$. 

$($ii$)$ If $\wh{a}(\phi,\rho)=\wh{a}(\phi)$ then $a(\xi,\tau)=\chi(\xi)a(\xi/|\xi|)$ is a symbol in $S^0$ 
and \eqref{eq:taylor4} is valid. 

$($iii$)$ Let $\wh{r}\in\scrC^{\infty,\ell+1}(\whsz^n_+)$ and $A$ be a differential operator of order $k$ on 
$\sz^n_+$ with smooth coefficients. In polar-coordinates $A$ takes the form 
 $$\rho^{-k}\sum_{i=0}^k A_i(\rho)(\rho\partial_\rho)^i,\qquad 
     A_i(\rho)\in\scrC^\infty\big([0,\pi/2],\mathrm{Diff}^{k-i}(\sz^{n-1})\big).$$
Therefore $A\wh{r}=O(\rho^\eps)$ for any $A$ of order $k<\ell+1$. 
We conclude that $\wh{r}\in\scrC^{\ell}(\sz^n_+)$. It follows that 
$\chi(\xi)\wh{r}\big((\xi,\tau)/|(\xi,\tau)|\big)$ satisfies \eqref{eq:taylor4} provided $|\alpha|+k\le \ell$. 

$($iv$)$ To complete the proof it remains to combine $($i$)$-$($iii$)$ with the expansion \eqref{eq:taylor2} 
which is valid for arbitrary $\ell$. 

To prove part b$)$ it remains to verify the existence of the limit-family. Obviously we may 
assume that $\wh{a}_0=0$ and then show that the limit-family is zero. Since $\wh{a}_0=0$ we have 
$\wh{a}=\rho\wh{b}$ with $\wh{b}\in \scrC^{\infty}_T(\whsz^n_+)$. If we associate $b$ with $\wh{b}$ then 
$b\in \bfS^0(\Lambda)$. Therefore it suffices to assume $\wh{a}=\rho$ and show that the 
limit-family exists and equals zero. In this case observe that 
 $$a(\xi,\tau)=\chi(\xi)\wh{a}\Big(\frac{(\xi,\tau)}{|(\xi,\tau)|}\Big)
     =\chi(\xi)\arccos\frac{\tau}{|(\xi,\tau)|}$$
converges to zero uniformly on compact subsets of $\rz^n$ as $\tau$ tends to $+\infty$. Since $\arccos$ is 
bounded it follows that, for $\alpha=0$,  
\begin{equation}\label{eq:taylor6}
 \sup_{\xi\in\rz}|D^\alpha_\xi a(\xi,\tau)|\spk{\xi}^{|\alpha|-1}\xrightarrow{\tau\to+\infty}0.
\end{equation}
It remains to verify \eqref{eq:taylor6} for arbitrary $\alpha\not=0$. However, this follows easily from the case 
$\alpha=0$ and product rule. 
\end{proof}

\subsubsection{Weakly classical symbols}\label{sec:3.4}

We now introduce a subcalculus of $\bfS^\mu(\Lambda)$ in which ellipticity is characterized purely in terms of the 
invertibility of certain principal symbols. 

\begin{definition}\label{def:calc2}
We define $\bfS^\mu_\wcl(\Lambda)$ as the subspace of $\bfS^\mu(\Lambda)$ consisting of all symbols $a$ for 
which there exist homogeneous symbols $a^{(\mu-j)}\in \bfS^{(\mu-j)}(\Lambda)$ such that, for any $N\in\nz_0$,  
 $$a-\sum_{j=0}^{N-1}\chi(\xi)a^{(\mu-j)}\;\in\; \bfS^{-N}(\Lambda)$$
for some $($and then for any$)$ zero-excision function $\chi$. We call ${a}^{(\mu)}$ the \emph{homogeneous principal 
symbol} of $a$. The leading term in \eqref{eq:taylor1} for $\wh{a}$ associated with $a^{(\mu)}$ as in \eqref{eq:taylor3} 
we shall denote by $\wh{a}^{(\mu)}_0$ and refer to as the \emph{principal angular symbol} of $a$. 
Moreover, we ask that the homogeneous principal symbol does not depend on the $x$-variable for large 
$|x|$.\footnote{This is a technical assumption to cope with the non-compactness of $\rz^n$. }
\end{definition}

Obviously, the subscript $\wcl$ stands for ``weakly classical". As in the standard case, 
\begin{equation*}
 a^{(\mu)}(x,\xi,\tau,\theta)=\lim_{t\to\infty}t^{-\mu}a(x,t\xi,t\tau,\theta)
 \qquad(\xi\not=0). 
\end{equation*}
In particular, the homogeneous components of a weakly classical symbol are uniquely determined. 
Remark $\mathrm{\ref{rem:radial}}$ and $\rho/\sin\rho\xrightarrow{\rho\to 0}1$ yields that  
\begin{equation*}
 \wh{a}^{(\mu)}_0(x,\phi,\theta)=\lim_{\rho\to 0}\rho^{-\mu}a^{(\mu)}(x,\sin\rho\cdot\phi,\cos\rho,\theta)
 \qquad (\phi\in\sz^{n-1}).
\end{equation*}
In particular, the radial principal symbol is a well-defined object and is independent of the $x$-variable for large $|x|$.  

The class of weakly classical symbols is closed under composition and taking the adjoint (of the associated operators). 
Both homogeneous principal symbol and angular principal symbol behave multiplicatively under composition. 

\begin{remark}
As a consequence of the considerations after Definition $\mathrm{\ref{def:wcl}}$, 
$\bfS^\mu_\wcl(\Lambda)$ contains $S^\mu_\cl$ and, in case $-\mu\in\nz_0$, also $S^\mu_\cl(\Lambda)$. 
In fact, for $b\in S^\mu_\cl$ and $a\in S^\mu_\cl(\Lambda)$ the homogeneous principal symbols are the usual 
ones. The principal angular symbols are $\wh{b}^{(\mu)}_0(x,\phi,\theta)=b^{(\mu)}(x,\phi)$ and 
$\wh{a}^{(\mu)}_0(x,\phi,\theta)=0$ in case $\mu<0$ and 
$\wh{a}^{(0)}_0(x,\phi,\theta)=a^{(0)}(x,0,1,\theta)$ in case $\mu=0$. 
\end{remark}

The following lemma is easily verified: 

\begin{lemma}
For $a^{(\mu)}\in \bfS^{(\mu)}(\Lambda)$ the following are equivalent: 
\begin{itemize}
 \item[a$)$] $a^{(\mu)}$ is invertible with inverse belonging to $\bfS^{(-\mu)}(\Lambda)$.  
 \item[b$)$] Both $a^{(\mu)}$ and its angular symbol $\wh{a}^{(\mu)}_0$ are pointwise 
  invertible.\footnote{where $a^{(\mu)}$ is considered as a function defined either on 
  $\rz^n_x\times(\rz^n_\xi\setminus\{0\})\times\Lambda$ or on $\rz^n_x\times\whsz^n_+\times[a,b]$, 
  while $\wh{a}^{(\mu)}_0$ is a function defined on $\rz^n_x\times\sz^{n-1}\times[a,b]$.}
\end{itemize}
\end{lemma}

We shall call $a\in \bfS^0_\wcl(\Lambda)$ \emph{parameter-elliptic} provided the following two 
conditions hold: 
\begin{itemize}
 \item[$($S1$)$] Both the homogeneous principal symbol ${a}^{(\mu)}$ and the angular symbol 
  $\wh{a}^{(\mu)}_0$ are $($pointwise$)$ invertible. 
 \item[$($S2$)$] The limit family $a^\infty$ is pointwise invertible in $S^0$. 
\end{itemize}
Recall that $($S1$)$ is equivalent to the invertibility of  $a^{(\mu)}$ in the class $\bfS^{(-\mu)}(\Lambda)$. 
It is then clear that $($S1$)$ implies the ellipticity-condition $($I$)$ of Section \ref{sec:3.2} and we obtain a 
parametrix that coincides with the inverse for large values of the parameter: 

\begin{theorem}\label{thm:parametrix2}
Let $a\in \bfS^\mu_\wcl(\Lambda)$ be parameter-elliptic. Then there exists a $b\in \bfS^{-\mu}_\wcl(\Lambda)$ 
such that both $a(\lambda)\#b(\lambda)-1$ and $b(\lambda)\#a(\lambda)-1$ belong to $S^{-\infty}_0(\Lambda)$ 
and vanish for large enough $|\lambda|$. 
\end{theorem}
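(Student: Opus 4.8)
The plan is to reduce Theorem~\ref{thm:parametrix2} to the already-established Theorem~\ref{thm:parametrix1}, whose only gap is that its ellipticity hypothesis~(I) requires a quantitative estimate on the inverted symbol rather than mere pointwise invertibility of a principal symbol. So the first step is to verify that parameter-ellipticity in the sense of (S1)--(S2) for the weakly classical class implies parameter-ellipticity in the sense of (I)--(II) for the ambient class $\bfS^0(\Lambda)$. Condition (S2) is literally (II), so the real content is to show that (S1) implies (I). Here I would invoke the remark following Definition~\ref{def:calc2}: the invertibility of both $a^{(0)}$ and the angular symbol $\wh{a}^{(0)}_0$ is exactly the invertibility of $a^{(0)}$ inside the class $\bfS^{(0)}(\Lambda)$, via the Lemma stating that $\scrC^\infty_T(\whsz^n_+)$ is closed under taking inverses. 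Because $a^{(0)}$ arises by restricting a function $\wh{a}^{(0)}$ on the compact semi-sphere $\sz^n_+$, and $\wh{a}^{(0)}$ together with its leading Taylor term $\wh{a}^{(0)}_0$ is invertible, the inverse $(\wh{a}^{(0)})^{-1}$ again lies in $\scrC^\infty_T(\whsz^n_+,E)$; compactness of the semi-sphere then yields a uniform bound on $(a^{(0)})^{-1}$, which is precisely the estimate demanded in (I). The passage from the homogeneous $a^{(0)}$ to the full symbol $a$ is harmless since $a-\chi(\xi)a^{(0)}\in S^{-1}(\Lambda)$ is lower-order and does not affect invertibility for large $|\xi|$.

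Once (I)--(II) are in force, Theorem~\ref{thm:parametrix1} supplies a $b\in\bfS^0(\Lambda)$ with $a\#b-1$ and $b\#a-1$ in $\bfS^{-\infty}_0(\Lambda)$, vanishing for large $|\lambda|$. The second, genuinely new step is to upgrade membership: I must show that the parametrix $b$ can be chosen in the \emph{refined} class $\bfS^0_\wcl(\Lambda)$ rather than merely in $\bfS^0(\Lambda)$. The natural candidate for its homogeneous principal symbol is $b^{(0)}:=(a^{(0)})^{-1}\in\bfS^{(0)}(\Lambda)$, which we just saw exists. The plan is to retrace the construction in the proofs of Theorem~\ref{thm:parametrix1} and the preceding proposition, checking that each operation preserves weak classicality: the first approximate inverse $\chi(\xi)(a^{(0)})^{-1}$ is weakly classical by construction, and the von~Neumann correction terms, being built by $\#$-composition and asymptotic summation (Lemma~\ref{lem:asymptotic}) out of symbols in $\bfS^0_\wcl(\Lambda)$, stay in the refined class provided I confirm that $\bfS^0_\wcl(\Lambda)$ is itself closed under the Leibniz product and under asymptotic summation.

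I expect the main obstacle to be exactly this closedness of $\bfS^0_\wcl(\Lambda)$ under $\#$ and asymptotic summation, together with the correct behaviour of the homogeneous principal symbol and the angular symbol under composition. One must check that if $a,b\in\bfS^0_\wcl(\Lambda)$ then $a\#b$ has homogeneous principal symbol $a^{(0)}b^{(0)}$ lying in $\bfS^{(0)}(\Lambda)$, i.e.\ that the pointwise product of two functions with Taylor asymptotics at the north-pole again has Taylor asymptotics, with leading term $\wh{a}^{(0)}_0\wh{b}^{(0)}_0$; this is a formal computation on the Taylor expansions~\eqref{eq:taylor1} but requires that the lower-order error $a\#b-ab$ does not destroy the $\scrC^{\infty,\gamma}$-structure near $\bfn$. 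Granting these structural facts, the remaining smoothing correction from Theorem~\ref{thm:parametrix1} lies in $\bfS^{-\infty}_0(\Lambda)$, hence trivially in the weakly classical class with vanishing principal and angular symbols, and Proposition~\ref{prop:1-R} removes the smoothing remainder for large $|\lambda|$, yielding the desired $b\in\bfS^0_\wcl(\Lambda)$ that inverts $a$ for large parameter.
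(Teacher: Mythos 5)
Your proposal is correct and takes essentially the same route as the paper: the paper's entire argument for Theorem~\ref{thm:parametrix2} consists of your first step --- (S2) \emph{is} (II), and (S1), via the inverse-closedness of $\scrC^{\infty}_T(\whsz^n_+)$ and the resulting uniform bound on $(a^{(0)})^{-1}$, yields (I) --- followed by an appeal to Theorem~\ref{thm:parametrix1}; the membership $b\in\bfS^0_\wcl(\Lambda)$ is left implicit there.

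Since you treat that membership explicitly, one simplification is worth pointing out: the obstacle you single out (closedness of $\bfS^0_\wcl(\Lambda)$ under $\#$ and under asymptotic summation, i.e.\ stability of Taylor asymptotics under Leibniz products) can be bypassed entirely. By Definition~\ref{def:calc2}, weak classicality asks only for a principal term modulo $\bfS^{-1}(\Lambda)$; no asymptotic structure whatsoever is required of the remainder. In the construction of Theorem~\ref{thm:parametrix1}, every correction beyond the first approximate inverse $\chi(\xi)a(\lambda)^{-1}$ of the proposition preceding it --- the von Neumann/asymptotic-sum terms (which lie in $\bfS^{-1}(\Lambda)$ because the remainders they are built from do), the limit-family correction $a^\infty(\theta)^{-1}r^\infty(\theta)\in\bfS^{-\infty}(\Lambda)$, and the smoothing correction supplied by Proposition~\ref{prop:1-R} --- lies in $\bfS^{-1}(\Lambda)$. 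Hence it suffices to check that $\chi(\xi)a(\lambda)^{-1}-\chi(\xi)\big(a^{(0)}\big)^{-1}\in\bfS^{-1}(\Lambda)$, which follows for large $|\xi|$ from the identity $a^{-1}-(a^{(0)})^{-1}=-a^{-1}\big(a-a^{(0)}\big)(a^{(0)})^{-1}$ together with $a-\chi(\xi)a^{(0)}\in\bfS^{-1}(\Lambda)$, the uniform bounds of (I), and $(a^{(0)})^{-1}\in\bfS^{(0)}(\Lambda)$ from the inverse-closedness lemma. This exhibits $b$ as weakly classical with homogeneous principal symbol $(a^{(0)})^{-1}$ without ever needing multiplicativity of Taylor expansions; your heavier route would also succeed, but it proves more than the theorem requires.
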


Note that a symbol $a\in S^\mu_\cl$ is parameter-elliptic as a weakly-classical symbol if and only if it has an
inverse $($with respect to the Leibniz product$)$, then belonging to $S^{-\mu}_\cl$. 
For $a\in S^0_\cl(\Lambda)$ parameter-ellipticity as a weakly-classical symbol is equivalent to the standard 
parameter-ellipticity in this symbol class. Instead, a symbol $a\in S^\mu_\cl(\Lambda)$ with negative integer 
order is never parameter-elliptic as a weakly-classical symbol. 

Let us conclude this section with a result we shall need in Section \ref{sec:4}, below. 

\begin{lemma}\label{lem:A}
Let $p\in S^\mu_\cl(N_0,N_1)$ and $r_j(\lambda)\in S^{\mu_j}_\cl(\Lambda;N_j,N_j)$, $j=0,1$, such that  
$-(\mu_0+\mu_1)\in\nz_0$. Then
 $$a(\lambda):=r_1(\lambda)\#p\# r_0(\lambda)\in \bfS^{\mu+\mu_0+\mu_1}_\wcl(\Lambda;N_0,N_1).$$ 
In particular, if $r_{j}^{(\mu_j)}$ denotes the homogeneous principal symbol of $r_j$, then 
\begin{align*}
 a^{(\mu+\mu_0+\mu_1)}&=r_{1}^{(\mu_1)}p^{(\mu)}r_{0}^{(\mu_0)},\\ 
 \wh{a}^{(\mu+\mu_0+\mu_1)}_0&=
  \begin{cases}
   r_{1}^{(\mu_1)}(x,0,1,\theta)\, p^{(\mu)}\, r_{0}^{(\mu_0)}(x,0,1,\theta)&\quad:\mu_0+\mu_1=0\\
   0&\quad:\mu_0+\mu_1<0
  \end{cases},\\
 {a}^{\infty}(\theta)&=
  \begin{cases}
   r_{1}^{(\mu_1)}(x,0,1,\theta)\#{p}\,\#r_{0}^{(\mu_0)}(x,0,1,\theta)&\quad:\mu_0+\mu_1=0\\
   0&\quad:\mu_0+\mu_1<0
  \end{cases},
\end{align*}
are the homogeneous principal symbol, the principal angular symbol, and the limit family, respectively. 
\end{lemma}
\begin{proof}
Let us first write $a(\lambda)=r_1(\lambda)\#\wt{p}(\lambda)\# \wt{r}_0(\lambda)$ 
with 
 $$\wt{p}(x,\xi,\lambda)=p(x,\xi)[\xi,\tau]^{\mu_0},\qquad 
     \wt{r}_0(x,\xi,\lambda)=[\xi,\tau]^{-\mu_0} \# r_0(x,\xi,\lambda),$$
where $[\xi,\tau]$ denotes a smooth positive function that coincides with $|(\xi,\tau)|$ outside the unit-ball. 
In particular, $\wt{r}_0\in S^0_\cl(\Lambda)\subset \bfS^0_\wcl(\Lambda)$. It remains to analize 
$r_1(\lambda)\#\wt{p}(\lambda)$. To this end recall that 
$S^\mu_\cl=\scrC^\infty_b(\rz^n_x)\,\wh{\otimes}_\pi\, S^\mu_\cl(\rz^n_\xi)$
$($the completed projective tensor-product of two Fr\'{e}chet spaces$)$ and therefore we can represent 
$p$ in the form 
 $$p(x,\xi)=\sum_{\ell=1}^\infty c_\ell \alpha_\ell(x)q_\ell(\xi)$$
with zero-sequences $(\alpha_\ell)_\ell\subset\scrC^\infty_b(\rz^n_x)$ and 
$(q_\ell)_\ell\subset S^\mu_\cl(\rz^n_\xi)$, and an absolutely summable numerical sequence $(c_\ell)_\ell$. 
But then 
 $$(r_1\#\wt{p})(x,\xi,\lambda)=\sum_{\ell=1}^\infty c_\ell \beta_\ell(x,\xi,\lambda)q_\ell(\xi),\quad 
    \beta_\ell(x,\xi,\lambda)=(r_1\#\alpha_\ell)(x,\xi,\lambda)[\xi,\tau]^{\mu_0}.$$
By continuity of the Leibniz product, $(\beta_\ell)_\ell$ is a zero-sequence in 
$S^{\mu_0+\mu_1}_\cl(\Lambda)$, hence also in $\bfS^{\mu_0+\mu_1}_\wcl(\Lambda)$. 
We conclude that the series defining $r_1(\lambda)\#\wt{p}(\lambda)$ converges in 
$\bfS^{\mu+\mu_0+\mu_1}_\wcl(\Lambda)$. The remaining claims are straightforward to verify. 
\end{proof}

\subsection{Weakly classical operators}\label{sec:3.5}

In the previous sections we worked exclusively on the level of symbols. We now pass to the corresponding spaces of 
pseudodifferential operators and write 
 $$\bfL^\mu_\wcl(\Lambda;\rz^n,N_0,N_1)=
     \Big\{A(\lambda)=\op(a)(\lambda) \;\big\vert\; a\in S^\mu_\wcl(\Lambda;N_0,N_1)\Big\}.$$
Similarly we obtain the space $\bfL^{-\infty}_0(\Lambda;\rz^n,N_0,N_1)$ using smoothing symbols. 

With $A(\lambda)=\op(a)(\lambda)\in \bfL^\mu_\wcl(\Lambda;\rz^n,N_0,N_1)$ we associate a principal symbol $\sigma(A)$ 
which has three components: The first component, $\sigma_1(A)$, is the homogeneous principal symbol of $a$, 
considered as a bundle 
morphism\footnote{by considering an $(N_1\times N_1)$-matrix valued function on some space $X$ as a bundle 
morphism between the trivial bundles $X\times\cz^{N_0}$  and $X\times\cz^{N_0}$} 
 $$ \big(\rz^n_x\times(\rz^n_\xi\setminus\{0\})\times\Lambda\big)\times\cz^{N_0}\lra 
      \big(\rz^n_x\times(\rz^n_\xi\setminus\{0\})\times\Lambda\big)\times\cz^{N_1}.$$
The second component,  $\sigma_2(A)$, is the principal angular symbol of $a$, considered as a bundle morphism
 $$ \big(\rz^n_x\times\sz^{n-1}_\xi\times[a,b]\big)\times\cz^{N_0}\lra 
      \big(\rz^n_x\times\sz^{n-1}_\xi\times[a,b]\big)\times\cz^{N_1}.$$
The third and last component,  $\sigma_3(A)$, is the operator-family associated with the limit-family of $a$, considered 
as a bundle morphism 
 $$[a,b]\times H^{s}(\rz^n)\lra [a,b]\times H^{s-\mu}(\rz^n)$$
$($with an arbitrary choice of $s)$. We can now apply in this setting the abstract approach described in 
Section \ref{sec:2} and obtain resulting classes of Toeplitz operators  
 $$\bfL^\mu_\wcl(\Lambda;\rz^n,(N_0,P_0),(N_1,P_1))=P_1(\lambda)\,\bfL^\mu_\wcl(\Lambda;\rz^n,N_0,N_1)\,
     P_0(\lambda)$$
with the corresponding definition of the principal symbol. 

\subsection{Weakly classical operators on smooth manifolds}\label{sec:3.6}

We finally indicate how to modify the above constructions to deal with operators on a  smooth closed Riemannian
manifold $M$ of dimension $n$. With hermitean vector bundles $F_0$ and $F_1$ over $M$ we define 
$\bfL^\mu(\Lambda;M,F_0,F_1)$ as the space of all continuous and bounded functions 
$A:\Lambda\to L^\mu(M,F_0,F_1)$ that have a limit-family 
 $$A^\infty \in \scrC([a,b],L^\mu(M,F_0,F_1)),$$ 
analogously defined as in Definition \ref{def:calc}, replacing $S^\mu$ by $L^\mu(M,F_0,F_1)$. 
Those $A$ having vanishing limit-family form the subspace $\bfL^\mu_0(\Lambda;M,F_0,F_1)$. 
Moreover, we let $\bfL^\mu_\wcl(\Lambda;M,F_0,F_1)$ denote the subspace of those operators 
having in local coordinates and local trivialisations a weakly classical symbol in the sense of Definition \ref{def:calc2}. 
The local homogeneous principal symbols of $A(\lambda)$ globally define a homogeneous principal symbol 
\begin{equation}\label{eq:sigma1}
 \sigma_1(A):\pi^*F_0\lra\pi^*F_1,\qquad \pi:(T^*M\setminus\{0\})\times\Lambda\to M,
\end{equation}
where $\pi$ denotes the canonical projection onto $M$. Furthermore, the local angular symbols of $A(\lambda)$ 
globally yield a morphism 
\begin{equation}\label{eq:sigma2}
 \sigma_2(A):\pi_1^*F_0\lra\pi_1^*F_1,\qquad \pi_1:S^*M\to M,
\end{equation}
where $\pi_1$ is the canonical projection of the co-sphere bundle onto $M$. 
The limit-family we consider as a morphism 
\begin{equation*}
 \sigma_3(A):[a,b]\times H^s(M,F_0)\lra [a,b]\times H^{s-\mu}(M,F_1)
\end{equation*}
between trivial bundles. Parameter-ellipticity of $A(\lambda)$ means bijectivity of all three morphisms $\sigma_j(A)$. 
Again we can apply the abstract approach of Section \ref{sec:2} to obtain resulting classes of Toeplitz operators  
\begin{equation*}
 \bfL^\mu_\wcl(\Lambda;M,(F_0,P_0),(F_1,P_1))=P_1(\lambda)\,\bfL^\mu_\wcl(\Lambda;M,F_0,F_1)\,
 P_0(\lambda)
\end{equation*}
where $P_j(\lambda)\in \bfL^0_\wcl(\Lambda;M,F_j,F_j)$ for $j=0,1$, including the corresponding notion of principal 
symbol and parameter-ellipticity. 

\section{Applications}\label{sec:4}

Throughout this section let
 $$P_j\in L^0_\cl(M,F_j,F_j),\qquad j=0,1,$$ 
be two pseudodifferential projections not depending on the parameter and 
 $$H^s(M,F_j,P_j)=P_j\big(H^s(M,F_j)\big),\qquad s\in\rz.$$
We shall consider two types of parameter-dependent operators. The first is 
 $$A_0(\lambda)=P_1\,A(\lambda)\,P_0,\qquad A(\lambda)\in L^\mu_\cl(\Lambda;M,F_0,F_1),$$ 
with arbitrary order $\mu\in\rz$. For the second type we assume $F_0=F_1=F$ and then let   
 $$A_1(\lambda)=P_1\,(\tau^\mu e^{i\theta}-A)\,P_0,\qquad A\in L^\mu_\cl(M,F,F),$$ 
with positive integer order $\mu\in\nz$ $($note that if $A$ is a differential operator then $A_1(\lambda)$ is a particular 
case of $A_0(\lambda)$; for pseudodifferential operators $A$ however this is not the case$)$.  
We shall derive criteria ensuring the invertibility of 
\begin{equation*}
 A_j(\lambda):H^s(M,F_0,P_0)\lra H^{s-\mu}(M,F_1,P_1),\qquad j=0,1, 
\end{equation*}
for $\lambda\in\Lambda$ having sufficiently large modulus. 
Let us denote by $p_j^{(0)}$ the homogeneous principal symbol of $P_j$ and, 
with $\pi$ and $\pi_1$ as in \eqref{eq:sigma1} and \eqref{eq:sigma2}, write 
 $$E^0(P_j):=p_j^{(0)}\big(\pi^*F_j\big)\subset\pi^*F_j,\qquad 
     E^1(P_j):=p_j^{(0)}\big(\pi_1^*F_j\big)\subset \pi^*_1F_j.$$

\begin{theorem}\label{thm:toeplitz}
Let $a^{(\mu)}$ be the homogeneous principal symbol of $A(\lambda)$ and
assume that the following mappings are isomorphisms: 
\begin{itemize}
 \item[$($a$)$] $p_1^{(0)}a^{(\mu)}p_0^{(0)}:E^0(P_0)\to E^0(P_1)$,
 \item[$($b$)$] $p_1^{(0)}a^{(\mu)}\big|_{(\xi,\lambda)=(0,1)}p_0^{(0)}:E^1(P_0)\to E^1(P_1)$, 
 \item[$($c$)$] $P_1a^{(\mu)}\big|_{(\xi,\lambda)=(0,1)}P_0:H^0(M,F_0,P_0)\to H^0(M,F_1,P_1)$
\end{itemize}
$($in $(\mathrm{b})$ we consider $a^{(\mu)}\big|_{(\xi,\lambda)=(0,1)}$ as a bundle homomorphism 
$\pi_1^*F_0\to \pi_1^*F_1$, 
in $(\mathrm{c})$ as a bundle homomorphism $F_0\to F_1$ that induces a map between the $L_2$-spaces$)$. 
Then there exist a $B(\lambda)\in \bfL^0_\wcl(\Lambda;M,F_1,F_0)$ and an 
$R(\lambda)\in L^{-\mu}_\cl(\Lambda;M,F_1,F_1)$ such that $B_0(\lambda):=P_0B(\lambda)R(\lambda)\,P_1$ 
is the inverse of $A_0(\lambda)$ for sufficiently large 
$\lambda$.\footnote{More precisely, both 
$B_0(\lambda)A_0(\lambda)-P_0\in \bfL^{-\infty}_0(\Lambda;M,(F_0,P_0),(F_0,P_0))$ and 
$A_0(\lambda)B_0(\lambda)-P_1\in \bfL^{-\infty}_0(\Lambda;M,(F_1,P_1),(F_1,P_1))$ vanish for large 
$|\lambda|$.} 
\end{theorem}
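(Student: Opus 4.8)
The plan is to reduce $P_1A(\lambda)P_0$ to a zero-order weakly classical Toeplitz operator, to invert that operator with the abstract machinery of Section \ref{sec:2} as realized in Section \ref{sec:3.6}, and finally to undo the order reduction. Throughout write $A'(\lambda)=P_1A(\lambda)P_0$ and let $p_j^{(0)}$ be the homogeneous principal symbol of $P_j$. First I would observe that the $\lambda$-independent projections $P_j\in L^0_\cl(M,F_j,F_j)$ may be regarded as elements of $\bfL^0_\wcl(\Lambda;M,F_j,F_j)$: a classical order-zero symbol that is constant in $\lambda$ is weakly classical (this is essentially the content of the discussion around Example \ref{ex:2} and Remark \ref{rem:radial}), and its three component symbols are $\sigma_1(P_j)=p_j^{(0)}$, the angular symbol $\sigma_2(P_j)=p_j^{(0)}|_{S^*M}$, and the limit family $\sigma_3(P_j)=P_j$ itself. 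Hence each $P_j$ is a projection in the calculus whose three ranges are exactly $E^0(P_j)$, $E^1(P_j)$ and $H^0(M,F_j,P_j)$, so that the hypotheses (1)--(3) are precisely invertibility statements for three component symbols.

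Next I would fix a \emph{scalar} parameter-elliptic order reduction $R(\lambda)\in L^{-\mu}_\cl(\Lambda;M,F_1,F_1)$ built from $\spk{\xi,\tau}^{-\mu}$, invertible for large $|\lambda|$ with inverse in $L^{\mu}_\cl(\Lambda;M,F_1,F_1)$, and set $\wt A(\lambda):=P_1R(\lambda)P_1A(\lambda)P_0$. Since $R A$ is of order zero and lies in $\bfL^0_\wcl(\Lambda;M,F_0,F_1)$, the operator $\wt A$ is a zero-order weakly classical Toeplitz operator, and by construction $\wt A=(P_1RP_1)A'$. The key computation is that, because $R$ is scalar so that $r^{(-\mu)}=\spk{\xi,\tau}^{-\mu}$ is invertible, commutes, and equals $1$ at $(\xi,\tau)=(0,1)$, its three symbols are $\sigma_1(\wt A)=\spk{\xi,\tau}^{-\mu}\,p_1^{(0)}a^{(\mu)}p_0^{(0)}$ on $E^0(P_0)\to E^0(P_1)$, $\sigma_2(\wt A)=p_1^{(0)}\,a^{(\mu)}|_{(\xi,\lambda)=(0,1)}\,p_0^{(0)}$ on $E^1(P_0)\to E^1(P_1)$, and $\sigma_3(\wt A)=P_1\,a^{(\mu)}|_{(\xi,\lambda)=(0,1)}\,P_0$ on $H^0(M,F_0,P_0)\to H^0(M,F_1,P_1)$; here the angular symbol is identified via Remark \ref{rem:radial} and the limit family via Example \ref{ex:2} (with Lemma \ref{lem:A} and the multiplicativity of the limit family). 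Thus $\wt A$ is parameter-elliptic in the sense of Section \ref{sec:3.6} exactly when (1)--(3) hold.

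Applying the Toeplitz form of the abstract theorem (Theorem \ref{thm:main1}, whose required implication (L1)$\Rightarrow$(L2) is supplied for the weakly classical calculus by Theorem \ref{thm:parametrix2}) I obtain $B(\lambda)\in\bfL^0_\wcl(\Lambda;M,(F_1,P_1),(F_0,P_0))$ with $B\wt A=P_0$ and $\wt AB=P_1$ for large $|\lambda|$. It then remains to pass from the inverse of $\wt A=(P_1RP_1)A'$ to that of $A'$. Writing $B=P_0BP_1$ one checks $B\,(P_1RP_1)=P_0BRP_1$, so that the candidate inverse is exactly the expression $P_0BRP_1$ with the uncompressed reduction $R$ appearing in the statement. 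The two identities $P_0BRP_1\cdot A'=P_0$ and $A'\cdot P_0BRP_1=P_1$ then follow from $A'=(P_1RP_1)^{-1}\wt A$ once one knows that the compressed reduction $P_1RP_1$ is invertible for large $|\lambda|$ as a map $H^{s-\mu}(M,F_1,P_1)\to H^{s}(M,F_1,P_1)$.

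The main obstacle is precisely this last invertibility: the calculus is developed only in order zero, whereas $P_1RP_1$ has order $-\mu$, and $R$ does \emph{not} commute with $P_1$ (only their principal symbols do). I would avoid leaving the zero-order theory by composing with $P_1R^{-1}P_1$: both $P_1R^{-1}P_1\cdot P_1RP_1$ and $P_1RP_1\cdot P_1R^{-1}P_1$ are zero-order Toeplitz operators all three of whose component symbols reduce to the identity on the projected bundles (the scalar factors $r^{(\pm\mu)}$ cancel), and hence are invertible for large $|\lambda|$ by Theorem \ref{thm:main1}; this yields a two-sided inverse of $P_1RP_1$ and, through the factorization above, the inverse $P_0BRP_1$ of $P_1AP_0$. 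The remaining bookkeeping---that all auxiliary remainders are smoothing Toeplitz operators vanishing for large $|\lambda|$---is routine and rests on Proposition \ref{prop:1-R}.
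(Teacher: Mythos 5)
Your proof is correct, but at the decisive step it takes a genuinely different route from the paper. Both arguments begin identically: reduce to order zero with an order reduction $R(\lambda)\in L^{-\mu}_\cl(\Lambda;M,F_1,F_1)$, check via Lemma \ref{lem:A} and Example \ref{ex:2} that the three component symbols of the reduced Toeplitz operator are (up to invertible scalar factors) exactly the three maps in hypotheses (1)--(3), and invoke Theorem \ref{thm:main1} (with Theorem \ref{thm:parametrix2} supplying the implication from ellipticity to parametrix in the weakly classical calculus). The difference is how the reduction is made compatible with $P_1$. The paper conjugates the projection: it sets $\wt{P}_1(\lambda)=R(\lambda)P_1S(\lambda)$, again a projection and in $\bfL^0_\wcl$ by Lemma \ref{lem:A}, and works with $\wt{P}_1(RA)P_0$ relative to the pair $(P_0,\wt{P}_1)$; the exact intertwining relations $\wt{P}_1R=RP_1$ and $S\wt{P}_1=P_1S$ then make the return to $P_1A(\lambda)P_0$ purely algebraic, so both identities $B'A'=P_0$ and $A'B'=P_1$ drop out at once, with no extra invertibility argument and no structural assumption on $R$. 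You instead keep the pair $(P_0,P_1)$ and compress the reduction, working with $\wt{A}=(P_1RP_1)(P_1AP_0)$; this forces you to (i) take $R$ with scalar principal symbol so that $r^{(\pm\mu)}$ commutes with $p_1^{(0)}$ in the symbol computations, and (ii) prove separately that $Y=P_1RP_1$ is invertible between the projected Sobolev spaces for large $|\lambda|$ --- which you correctly identify as the crux and settle by two further applications of the Toeplitz ellipticity theorem, to $ZY$ and $YZ$ with $Z=P_1SP_1$, using that invertibility of both products forces invertibility of $Y$. That argument is sound; the one point to tidy is that for $Z$ to be defined as an element of the calculus on all of $\Lambda$ you should take $R$ globally invertible, $SR=RS=1$ for every $\lambda$ (as the paper assumes), rather than only for large $|\lambda|$, or else replace $S$ by a parametrix and track smoothing errors that vanish for large $|\lambda|$. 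In exchange for this extra work your route keeps the original parameter-independent projections throughout, whereas the paper's intermediate parametrix is a Toeplitz operator with respect to the conjugated projection $\wt{P}_1$; both routes terminate in the same inverse $P_0B(\lambda)R(\lambda)P_1$.
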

\begin{proof}
Let $R(\lambda)\in L^{-\mu}_\cl(\Lambda;M,F_1,F_1)$ be a reduction of orders, i.e., 
there exists an $S(\lambda)\in L^{\mu}_\cl(\Lambda;M,F_1,F_1)$ such that 
$S(\lambda)R(\lambda)=R(\lambda)S(\lambda)=1$ for all $\lambda\in\Lambda$. Then 
 $$\wt{A}(\lambda):=R(\lambda)A_0(\lambda)  \in L^0_\cl(\Lambda;M,F_0,F_1)\subset
     \bfL^0_\wcl(\Lambda;M,F_0,F_1)$$
and, due to Lemma \ref{lem:A}, 
 $$\wt{P}_1(\lambda):=R(\lambda)P_1S(\lambda)\;\in\;\bfL^0_\wcl(\Lambda;M,F_1,F_1).$$
Obviously, $\wt{P}_1(\lambda)$ is a projection. The assumptions $($a$)$, $($b$)$ and $($c$)$ in 
Theorem \ref{thm:toeplitz} now imply that 
\begin{equation*}
 \wt{P}_1(\lambda)\wt{A}(\lambda)P_0\;\in\; 
 \bfT^0_\wcl(\Lambda;M,(F_0,P_0),(F_1,\wt{P_1}))
\end{equation*}
is a parameter-elliptic Toeplitz operator $($the homogeneous principal symbol and the angular symbol are covered by 
$(\mathrm{a})$ and $(\mathrm{b})$, respectively, while $(\mathrm{c})$ covers the limit-family, cf. Lemma \ref{lem:A}.b$))$. 
Thus we find a parametrix 
 $$P_0{B}(\lambda)\wt{P}_1(\lambda)
     \;\in\; \bfT^0_\wcl(\Lambda;M,(F_1,\wt{P}_1),(F_0,P_0))$$ 
that coincides with the inverse for large parameter. Then 
 $$B_0(\lambda)A_0(\lambda)=\big(P_0B\wt{P}_1(\lambda)\big)\big(\wt{P}_1(\lambda)\wt{A}(\lambda)P_0\big)=P_0$$
for large $|\lambda|$, as well as 
\begin{align*}
 A_0(\lambda)B_0(\lambda)
 =S(\lambda)\big(\wt{P}_1(\lambda)\wt{A}(\lambda)P_0\big)\big(P_0B\wt{P}_1(\lambda)\big)R(\lambda)
 =S(\lambda)\wt{P}_1(\lambda)R(\lambda)=P_1.
\end{align*}
This finishes the proof. 
\end{proof}

Similarly, one can derive the following result $($which in case of $A$ being differential is a special case of Theorem 
\ref{thm:toeplitz}, above$):$

\begin{theorem}\label{thm:toeplitz2}
Let $a^{(\mu)}$ be the homogeneous principal symbol of $A$ and
assume that the following mappings are isomorphisms: 
\begin{itemize}
 \item[$($a$)$] $p_1^{(0)}(\lambda-a^{(\mu)})p_0^{(0)}:E^0(P_0)\to E^0(P_1)$,
 \item[$($b$)$] $P_1P_0:H^0(M,F,P_0)\to H^0(M,F,P_1)$. 
\end{itemize}
Then there exist a $B(\lambda)\in \bfL^0_\wcl(\Lambda;M,F,F)$ and an 
$R(\lambda)\in L^{-\mu}_\cl(\Lambda;M,F,F)$ such that $B_1(\lambda):=P_0B(\lambda)R(\lambda)\,P_1$ 
is the inverse of $A_1(\lambda)$ for sufficiently large 
$\lambda$.\footnote{Compare the previous footnote for a more detailed statement.}
\end{theorem}

In fact, the proof is analogous to that of Theorem \ref{thm:toeplitz}. Again let $R(\lambda)$ be a reduction 
of orders with inverse $S(\lambda)$. With $\wt{P}_1(\lambda)=R(\lambda)P_1S(\lambda)$ and 
$\wt{A}(\lambda)=R(\lambda)A_1(\lambda)$ one verifies that $\wt{P}_1(\lambda)\wt{A}(\lambda)P_0$ is  
parameter-elliptic in $\bfL^0_\wcl(\Lambda;M,(F,\wt{P}_1),(F,P_0))$. To this end note that 
$\wt{A}(\lambda)$ has homogeneous principal symbol $r^{(-\mu)}(\tau^\mu e^{i\theta}-a^{(\mu)})$, 
angular symbol $r^{(-\mu)}\big|_{(\xi,\tau)=(0,1)}e^{i\theta}$ and limit-family  
$r^{(-\mu)}\big|_{(\xi,\tau)=(0,1)}e^{i\theta}$. 

\subsection{The resolvent of a pseudodifferential operator in projected subspaces}\label{sec:4.2} 

Let us consider above the particular case $P_0=P_1=:P$. Then $A\in L^\mu_\cl(M,F,F)$ induces 
a densily defined, unbounded operator   
\begin{equation*}
 A_P:=PAP:\scrC^\infty(M,F,P)\subset H^s(M,F,P)\lra H^s(M,F,P),
\end{equation*}
where $s\in\rz$ is arbitrary and $\scrC^\infty(M,F,P):=P\big(\scrC^\infty(M,F)\big)$. 
Given the strip $\Lambda$ let us write 
$\Lambda^\wedge=\{z=\tau e^{i\theta}\,\vert\, (\tau,\theta)\in\Lambda\}$. 
Then the above Theorem \ref{thm:toeplitz2} has the following corollary$:$ 

\begin{corollary}
Let $a^{(\mu)}$ and $p^{(0)}$ be the homogeneous principal symbol of $A$ and $P$, respectively. If 
\begin{equation}\label{eq:spec}
 p^{(0)}a^{(\mu)}p^{(0)}:E(P)\lra E(P),\qquad E(P)=p^{(0)}(\pi_1^* F),
\end{equation}
has $($fibrewise$)$ no spectrum in $\Lambda^\wedge$, then $A_P$ has a unique closed extension, 
given by the action 
of $A_P$ on the domain $H^{s+\mu}(M,F,P)$. Denoting this extension again by $A_P$, the resolvent 
$(z-A_P)^{-1}$ exists for sufficiently large $z\in\Lambda^\wedge$ and satisfies the uniform estimate
 $$\|(z-A_P)^{-1}\|_{\scrL(H^s(M,F,P))}\le C |z|^{-1}$$
with some constant $C$. In particular, if $\Lambda^\wedge$ contains the right complex half-plane, then $A_P$ 
is the generator of an analytic semi-group. 
\end{corollary}
\begin{proof}
It is clear that $H^{s+\mu}(M,F,P)$ belongs to the domain of the closure of $A_P$. Now let 
$u$ belong to the maximal domain, i.e., both $u$ and $A_Pu=PAPu$ belong to $H^{s}(M,F,P)$. 
With the notation of  Theorem \ref{thm:toeplitz2} it follows that 
 $$u=B_1(\lambda)A_1(\lambda)u\in B_1(\lambda)(H^{s}(M,F,P))\subset 
     H^{s+\mu}(M,F,P).$$
Hence the closure and the maximal closed extension coincide and have domain $H^{s+\mu}(M,F,P)$. 
The stated norm estimate follows by writing $z=\tau^\mu e^{i\theta}$ and using that then 
 $$\|(z-A_P)^{-1}\|_{\scrL(H^s(M,F,P))}=\|PB(\lambda)R(\lambda)P\|_{\scrL(H^s(M,F,P))}\le 
     C |\tau|^{-\mu}=C|z|^{-1}$$
for $|z|$ large enough (the estimate is valid, since $R(\lambda)\in L^{-\mu}_\cl(\Lambda;M,F,F))$. 
\end{proof}

Let us mention the following alternative way to prove the previous theorem: With an elliptic operator 
$B\in L^\mu_\cl(M,F,F)$ having scalar principal symbol $-|\xi|^\mu$, let 
 $$C=PAP+(1-P)B(1-P)\;\in\; L^\mu_\cl(M,F,F).$$ 
Then $\tau^\mu-C=P(\tau^\mu-A)P+(1-P)(\tau^\mu-B)(1-P)$
and $c^{(\mu)}:\pi_1^* F\to\pi_1^* F$, the homogeneous principal symbol of $C$, 
does  not have spectrum in $\Lambda^\wedge$ if and only if this is true for \eqref{eq:spec}. 
We can apply the calculus of \cite{Grub} to obtain a parametrix $D(\tau)$ which 
coincides with $(\tau^\mu-C)^{-1}$ for large $\tau$. Then $PD(\tau)P$ is the inverse of 
$P(\tau^\mu-A)P$ for large $\tau$. 

\textbf{Acknowledgement:} The author thanks Professors Savin and Sternin for bringing their work \cite{SaSt} 
to his attention. 

\begin{small}
\bibliographystyle{amsalpha}

\end{small}

\end{document}